\documentclass{imsart}
 \pdfoutput=1
\usepackage{amsmath,amssymb,bbm,amsthm,graphicx,subcaption,booktabs}
\usepackage[numbers]{natbib}
\usepackage{float}
\usepackage{soul}
\usepackage{romannum}
\usepackage{multirow}
\usepackage{csquotes}
\usepackage{gensymb}
\usepackage{mathtools}
\RequirePackage[colorlinks,citecolor=blue,urlcolor=blue]{hyperref}

\DeclareMathOperator*{\argmax}{arg\,max}
\DeclareMathOperator*{\argmin}{arg\,min}
\DeclareMathOperator{\diag}{diag}
\newcommand{\RN}[1]{%
  \textup{\uppercase\expandafter{\romannumeral#1}}%
}
\newcommand*{\rom}[1]{\expandafter\@slowromancap\romannumeral #1@}

\theoremstyle{plain}
\newtheorem{theorem}{Theorem}
\numberwithin{table}{section}
\newtheorem{lemma}[theorem]{Lemma}

\usepackage{enumerate}

\newtheorem{definition}{Definition}[section]
\newcommand{\minus}{\scalebox{0.75}[1.0]{$-$}}
\usepackage{accents}
\usepackage[utf8]{inputenc}
\usepackage[english]{babel}
\usepackage{mathrsfs}
\usepackage{eufrak}
\newlength{\dhatheight}
\newcommand{\doublehat}[1]{%
    \settoheight{\dhatheight}{\ensuremath{\hat{#1}}}%
    \addtolength{\dhatheight}{-0.35ex}%
    \hat{\vphantom{\rule{1pt}{\dhatheight}}%
    \smash{\hat{#1}}}}
    \DeclareMathOperator{\sign}{sign}
\newcommand*\diff{\mathop{}\!\mathrm{d}}
\usepackage{etoolbox}
\makeatletter
\g@addto@macro{\definition}{\upshape}
\makeatother

\usepackage[shortlabels]{enumitem}

\numberwithin{equation}{section}
\numberwithin{theorem}{section}

\begin{document}
\pagenumbering{arabic}
\begin{frontmatter}

\title{Bayesian Inference on Multivariate Medians and Quantiles}

\begin{aug}
\author{\fnms{Indrabati}  \snm{Bhattacharya}\ead[label=e1]{ibhatta@ncsu.edu}}
\and
\author{\fnms{Subhashis} \snm{Ghosal}\ead[label=e2]{sghosal@ncsu.edu}}
  \runtitle{Bayesian Inference on Multivariate Medians}
\affiliation{North Carolina State University}

  \address{Department of Statistics\\ North Carolina State University\\ 
          \printead{e1,e2}}
  \runauthor{I. Bhattacharya and S. Ghosal}

 \end{aug}

\begin{abstract}
In this paper, we consider Bayesian inference on a class of multivariate median and the multivariate quantile functionals of a joint distribution using a Dirichlet process prior. Since, unlike univariate quantiles, the exact posterior distribution of multivariate median and multivariate quantiles are not obtainable explicitly, we study these distributions asymptotically. We derive a Bernstein-von Mises theorem for the multivariate $\ell_1$-median with respect to general $\ell_p$-norm, which in particular shows that its posterior concentrates around its true value at $n^{-1/2}$-rate and its credible sets have asymptotically correct frequentist coverage. In particular, asymptotic normality results for the empirical multivariate median with general $\ell_p$-norm is also derived in the course of the proof which extends the results from the case $p=2$ in the literature to a general $p$. The technique involves approximating the posterior Dirichlet process by a Bayesian bootstrap process and deriving a conditional Donsker theorem. We also obtain analogous results for an affine equivariant version of the multivariate $\ell_1$-median based on an adaptive transformation and re-transformation technique. The results are extended to a joint distribution of multivariate quantiles. The accuracy of the asymptotic result is confirmed by a simulation study. We also use the results to obtain Bayesian credible regions for multivariate medians for Fisher's iris data, which consists of four features measured for each of three plant species.
\end{abstract}

\begin{keyword}
\kwd{Affine equivariance}
\kwd{Bayesian bootstrap}
\kwd{Donsker class}
\kwd{Dirichlet process}
\kwd{Empirical process}
\kwd{Geometric quantile}
\kwd{Multivariate median}
\end{keyword}

\end{frontmatter}
\section{Introduction}
Various definitions of a sample multivariate median for a set of multivariate observations  have been proposed and studied in the literature. One of the most popular versions of multivariate median is called the multivariate $\ell_1$-median. For a set of sample points $X_1,X_2,\dots,X_n \in \mathbb{R}^k$, $k\geq 2$, the sample $\ell_1$-median is obtained by minimizing $n^{-1}\sum_{i=1}^n \Vert X_i-\theta \Vert$ with respect to $\theta$, where $\Vert \cdot \Vert$ denotes some norm. The most commonly used norm is the $\ell_p$-norm $\Vert x \Vert_p=\left(\sum_{j=1}^k \vert x_j \vert ^p\right)^{1/p}$, $1 \leq p \leq \infty$. The most popular version of the $\ell_1$-median that uses the usual Euclidean norm $\Vert x \Vert_2 = \left(\sum_{j=1}^k x_j^2\right)^{1/2}$ is known as the spatial median. This corresponds to $p=2$, in which case we drop $p$ from the notation $\Vert \cdot \Vert_p$. Clearly the case $p=1$ gives rise to the vector of coordinatewise medians. The sample $\ell_1$-median with $\ell_p$-norm is given by
\begin{equation}
\hat{\theta}_{n;p}=\argmin_{\theta}\frac{1}{n}\sum_{i=1}^n\Vert X_i-\theta \Vert_p.
\end{equation}
The spatial median is a highly robust estimator of the location and its breakdown point is $1/2$ which is as high as that of the coordinatewise median (see Lopuhaa and Rousseeuw \citep{lopuhaa1991breakdown} for more details). Brown \citep{brown1983statistical} proved the asymptotic normality of the sample spatial median when $k=2$. Brown \citep{brown1983statistical} also showed that when the sample comes from a multivariate spherical normal distribution, the asymptotic efficiency of the spatial median relative to the sample mean vector tends to $1$ as $k \rightarrow \infty$. Gower \citep{gower1974algorithm} gave the codes for computing the spatial median. Weiszfeld \citep{weiszfeld1937point} devised an iterative method for computing the spatial median. Kuhn \citep{kuhn1973note} showed that Weiszfeld's \citep{weiszfeld1937point} algorithm does not converge if the starting point is inside the domain of attraction of the data points. Vardi and Zhang \citep{vardi2000multivariate} modified Weiszfeld's \citep{weiszfeld1937point} algorithm and showed that the modified algorithm converges for any starting point. \par 
The $\ell_1$-median functional of a probability distribution $P$ based on the $\ell_p$-norm is given by
\begin{equation}
\theta_p(P)= \argmin_{\theta}P\left(\Vert X - \theta \Vert_p - \Vert X \Vert_p\right),
\end{equation}
for $Pf = \int f \diff P$ and $1 \leq p \leq \infty$. It can be noted that this definition does not require any moment assumption on $X$, since $\vert\Vert X - \theta \Vert_p - \Vert X \Vert_p\vert \leq \Vert \theta \Vert_p$. Henceforth, we fix $1<p<\infty$ and drop $p$ from the notations $\hat{\theta}_{n;p}$ and $\theta_p(P)$ and just write $\hat{\theta}_n$ and $\theta(P)$ respectively.

We wish to infer about $\theta(P)$, which is a functional of the underlying and unknown random distribution $P$, given a set of observed data. The most commonly used prior on a random distribution $P$ is the Dirichlet process prior which we discuss in Section 2. In the univariate case, the exact posterior distribution of the median functional can be derived explicitly (see Chapter 4, Ghosal and van der Vaart \citep{ghosal2017fundamentals} for more details). Unfortunately, in the multivariate case, the exact posterior distribution can only be computed by simulations. The posterior distribution can be used to compute point estimates and credible sets. It is of interest to study the frequentist accuracy of the Bayesian estimator and frequentist coverage of posterior credible regions. In the parametric context, the Bernstein-von Mises theorem ensures that the Bayes estimator converges at the parametric rate $n^{-1/2}$ and a Bayesian $(1-\alpha)$ credible set has asymptotic frequentist coverage $(1-\alpha)$. Interestingly, a functional version of the Bernstein-von Mises theorem holds for the distribution under the Dirichlet process prior as shown by Lo (\citep{lo1983weak}, \citep{lo1986remark}). A functional Bernstein-von Mises theorem can potentially establish Bernstein-von Mises theorem for certain functionals. We study posterior concentration properties of the multivariate $\ell_1$-median $\theta(P)$ and show that the posterior distribution of $\theta(P)$ centered at the sample $\ell_1$-median $\hat{\theta}_n$ is asymptotically normal. We also note that this asymptotic distribution matches with the asymptotic distribution of $\hat{\theta}_n$ centered at the true value $\theta_0 \equiv\theta(P_0)$, where $P_0$ is the true value of $P$, thus proving a Bernstein-von Mises theorem for the multivariate $\ell_1$-median.\par
One possible shortcoming of the multivariate $\ell_1$-median is that it lacks equivariance under affine transformation of the data. Chakraborty, Chaudhuri and Oja \citep{chakraborty1998operating} proposed an affine-equivariant modification of the sample spatial median using a data-driven transformation and re-transformation technique. There is no population analog of this modified median. We define a Bayesian analog of this modified median in the following way. We put a Dirichlet process prior on the distribution of a transformed data depending on the observed data and induce the posterior distribution on $\theta(P)$ to make its distribution translation equivariant. We show that the asymptotic posterior distribution of $\theta(P)$ thus obtained centered at the affine-equivariant multivariate median estimate matches with the asymptotic distribution of the latter centered at $\theta_0$, while both the limiting distributions are normal.\par
Chaudhuri \citep{chaudhuri1996geometric} introduced the notion of geometric quantile as a generalization of the spatial median. For the univariate case it is easy to see that for $X_1,\dots,X_n \in \mathbb{R}$ and $u=2\alpha-1$, the sample $\alpha$-quantile $\hat{Q}_n(u)$ is obtained by minimizing $\sum_{i=1}^n\{\vert X_i-\xi \vert + u(X_i-\xi)\}$ with respect to $\xi$. Chaudhuri \citep{chaudhuri1996geometric} indexed the $k$-variate quantiles by points in the open unit ball $B^{(k)}\coloneqq\{u : u \in \mathbb{R}^k, \Vert u \Vert < 1\}$. For any $u \in B^{(k)}$, Chaudhuri \citep{chaudhuri1996geometric} defined the sample geometric $u$-quantile by minimizing $\sum_{i=1}^n \{\Vert X_i-\xi\Vert + \langle u, X_i-\xi \rangle\}$ with respect to $\xi$. Generalizing Chaudhuri's \citep{chaudhuri1996geometric} definition of multivariate quantile based on the $\ell_2$-norm to the $\ell_p$-norm with $1<p<\infty$, we define the multivariate sample quantile process as 
\begin{equation}
\hat{Q}_n(u)= \argmin_{\xi \in \mathbb{R}^k}\frac{1}{n}\sum_{i=1}^n\Phi_p(u, X_i-\xi),
\end{equation}
where $\Phi_p(u,t)=\Vert t \Vert_p + \langle u,t \rangle$ with $u \in B_q^{(k)}\coloneqq\{u : u \in \mathbb{R}^k, \Vert u \Vert_q < 1\}$ and $q$ is the conjugate index of $p$, i.e., $p^{-1}+q^{-1}=1$. It is easy to see that $\hat{Q}_n(0)$ coincides with the sample multivariate $\ell_1$-median $\hat{\theta}_n$. Similarly, for $u \in B_q^{(k)}$, the multivariate quantile process of a probability measure $P$ is given by
\begin{equation}
Q_P(u)= \argmin_{\xi \in \mathbb{R}^k}P\{\Phi_p(u,X-\xi)-\Phi_p(u,X)\}.
\end{equation}
with $Q_0(u)\equiv Q_{P_0}(u)$ being the true multivariate quantile function. We prove that, with $P$ having a Dirichlet process prior and for finitely many $u_1,\dots,u_m$, the joint distribution of $\{\sqrt{n}(Q_P(u_1)-\hat{Q}_n(u_1)),\dots,\sqrt{n}(Q_P(u_m)-\hat{Q}_n(u_m))\}$ given the data, converges to a multivariate normal distribution. Moreover, it is also noted that the joint distribution of $\{\sqrt{n}(\hat{Q}_n(u_1)-Q_0(u_1)),\dots,\sqrt{n}(\hat{Q}_n(u_m)-Q_0(u_m))\}$ converges to the same multivariate normal distribution.\par 
The rest of this paper is organized as follows. In Section 2, we give the background needed to introduce the main results. In Section 3, we state the Bernstein-von Mises theorem for the multivariate $\ell_1$-median and the theorems we need to prove the same. In Section 4, we give the Bernstein-von Mises theorem for the affine-equivariant $\ell_1$-median. In Section 5, we state the Bernstein-von Mises theorem related to the multivariate quantiles. In Section 6, we present a simulation study and an analysis of Fisher's iris data. All the proofs are given in Section 7.
\section{Background and Preliminaries}
Before giving the background, we introduce some notations that we follow in this paper. Throughout this paper, $\mathrm{N}_k(\mu,\Sigma)$ denotes a $k$-variate multivariate normal distribution with mean vector $\mu$ and covariance matrix $\Sigma$. Also, $\mathrm{DP}(\alpha)$ denotes a Dirichlet process with centering measure $\alpha$ (see Chapter 4, Ghosal and van der Vaart \citep{ghosal2017fundamentals} for more details).\par
Here $\rightsquigarrow$ and $\overset{P}\to$ denote weak convergence i.e. convergence in distribution and convergence in probability respectively. For a sequence $X_n$, the notation $X_n=O_P(a_n)$ means that $X_n/a_n$ is stochastically bounded.\par
Let $X_i \in \mathbb{R}^k$, $i=1,\dots, n$, be independently and identically distributed observations from a $k$-variate distribution $P$ and let $P$ have the $\mathrm{DP}(\alpha)$ prior. The parameter space is taken to be $\mathbb{R}^k$. The Bayesian model is given by
\begin{equation}
X_1,X_2,\dots, X_n\vert P \overset{\text{iid}}{\sim} P, \quad P \sim \mathrm{DP}(\alpha).
\end{equation}
The posterior distribution of $P$ given $X_1,X_2,\dots,X_n$ is $\mathrm{DP}(\alpha + n\mathbb{P}_n)$, where $\mathbb{P}_n=n^{-1}\sum_{i=1}^n \delta_{X_i}$ is the empirical measure (see Chapter 4, Ghosal and van der Vaart \citep{ghosal2017fundamentals} for more details). The posterior distribution $\mathrm{DP}(\alpha+n\mathbb{P}_n)$ can be expressed as $V_nQ + (1-V_n)\mathbb{B}_n$, where the variables $Q \sim \mathrm{DP}(\alpha)$, $\mathbb{B}_n \sim \mathrm{DP}(n\mathbb{P}_n)$ and $V_n \sim \mathrm{Be}(\vert \alpha \vert,n)$ are independent and $\mathrm{Be}(a,b)$ denotes a beta distribution with parameters $a$ and $b$. The process $\mathbb{B}_n$ is also known as the Bayesian bootstrap distribution and can be defined by the linear operator $\mathbb{B}_nf = \sum_{i=1}^nB_{ni}f(X_i)$, where $(B_{n1},B_{n2},\dots,B_{nn})$ is a random vector following the Dirichlet distribution $\mathrm{Dir}(n;1,1,\dots,1)$.\par
If $P \vert X_1,\dots,X_n \sim \mathrm{DP}(\alpha + n\mathbb{P}_n)$, then $\Vert P-\mathbb{B}_n \Vert_{\mathrm{TV}}=o_{\mathrm{Pr}}(n^{-1/2})$ a.s. $[P_0^{\infty}]$, where $\mathrm{Pr}\equiv P^{\infty}\times \mathbb{B}_n$ (see Chapter 12, Ghosal and van der Vaart \citep{ghosal2017fundamentals}). Using this fact, we show that conditional on the data, $\mathcal{L}(\sqrt{n}(\theta(P)-\hat{\theta}_n)|P \sim \mathrm{DP}(\alpha + n\mathbb{P}_n))$ imitates $\mathcal{L}(\sqrt{n}(\theta(\mathbb{B}_n)-\hat{\theta}_n))$ asymptotically, where $\theta(\mathbb{B}_n)=\argmin_{\theta} \mathbb{B}_n \Vert X - \theta \Vert_p$ and $\mathcal{L}(X)$ denotes the distribution of $X$. Thus we conclude that the asymptotic posterior distribution of $\sqrt{n}(\theta(P) - \hat{\theta}_n)$ is the same as the distribution of $\sqrt{n}(\theta(\mathbb{B}_n) - \hat{\theta}_n)$ conditional on the data. \par
The estimator $\hat{\theta}_n$ can be viewed as a $Z$-estimator, i.e. it satisfies the system of equations $\Psi_n(\theta)=\mathbb{P}_n\psi(\cdot,\theta)=0$, where $\psi(\cdot,\theta)=(\psi_1(\cdot,\theta),\dots,\psi_k(\cdot,\theta))^T$ is a $k \times 1$ vector of functions from $\mathbb{R}^k \times \mathbb{R}^k$ to $\mathbb{R}$ with
\begin{equation}
\psi_j(x,\theta) =\frac{\vert x_j-\theta_j \vert^{p-1}}{\Vert x-\theta \Vert_p^{p-1}}\sign(\theta_j-x_j), \quad j=1,\dots,k.
\end{equation}
Observe that $\theta_0 \equiv\theta(P_0)$ satisfies the system of equations $\Psi_0(\theta)=P_0\psi(\cdot,\theta)=0$. Also define
$\dot{\Psi}_0=[\partial{\Psi_0}/{\partial{\theta}}]_{\theta=\theta_0}$
and $\Sigma_0=P_0\psi(X,\theta_0)\psi^{T}(X,\theta_0)$. 
\section{Bernstein-von Mises theorem for $\ell_1$-median}
\label{sec: Main Result}
\begin{theorem}
Let $p \geq 2$ be a fixed integer. Suppose that the following conditions hold for $k \geq 2$.
\begin{enumerate}[\normalfont C1.]
\item The true probability distribution of $X \in \mathbb{R}^k$, $P_0$ has a probability density that is bounded on compact subsets of $\mathbb{R}^k$. 
\item The $\ell_1$-median of $P_0$ is unique.
\end{enumerate}
Then
\begin{enumerate}[label={\upshape(\roman*)}]
\item $\sqrt{n}(\hat{\theta}_n-\theta_0) \rightsquigarrow \mathrm{N}_k(0,\dot{\Psi}_0^{-1}\Sigma_0 \dot{\Psi}_0^{-1})$,
\item $\sqrt{n}(\theta(P)-\hat{\theta}_n) \rightsquigarrow \mathrm{N}_k(0,\dot{\Psi}_0^{-1}\Sigma_0 \dot{\Psi}_0^{-1})$ in $P_0$-probability.
\end{enumerate}
 Here $\dot{\Psi}_0=\int \dot{\psi}_{x,0}\ dP_0$, where
\begin{equation}
\dot{\psi}_{x,0}=\Big[\frac{\partial{\psi(x,\theta)}}{\partial{\theta}}\Big]_{\theta=\theta_0}.
\end{equation}
The matrix $\dot{\psi}_{x,0}$ looks like
\begin{equation}
\dot{\psi}_{x,0}=\frac{p-1}{\Vert x-\theta_0\Vert_p}\Big[\diag \left(\frac{\vert x_1-\theta_{01}\vert^{p-2}}{\Vert x-\theta_0\Vert_p^{p-2}},\dots,\frac{\vert x_k-\theta_{0k}\vert^{p-2}}{\Vert x-\theta_0\Vert_p^{p-2}}\right) - \frac{yy^T}{\Vert x-\theta_0\Vert_p^{2(p-1)}}\Big],
\end{equation}
with $y$ given by
\begin{equation}
y = \begin{bmatrix}
\vert x_1-\theta_{01} \vert^{p-1}\sign(x_1-\theta_{01}),\dots,\vert x_k-\theta_{0k} \vert^{p-1}\sign(x_k-\theta_{0k})
\end{bmatrix}^{T},
\end{equation}
and $$\Sigma_0=\frac{yy^T}{\Vert x-\theta_0\Vert_p^{2(p-1)}}.$$
Further if $k=2$, $(\mathrm{\romannum{1}})$ and $(\mathrm{\romannum{2}})$ hold for any $1<p<\infty$.
\end{theorem}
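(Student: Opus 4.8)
The plan is to treat $\hat{\theta}_n$ as the $Z$-estimator solving $\Psi_n(\theta)=\mathbb{P}_n\psi(\cdot,\theta)=0$, obtain part~(i) from the master theorem for $Z$-estimators, and then obtain part~(ii) by transferring the analysis to the Bayesian bootstrap process $\mathbb{B}_n$ and running a conditional version of the same argument. For part~(i) I would proceed in four steps. \emph{Consistency:} $\theta\mapsto\mathbb{P}_n(\Vert X-\theta\Vert_p-\Vert X\Vert_p)$ is convex with a.s.\ pointwise limit $M(\theta)=P_0(\Vert X-\theta\Vert_p-\Vert X\Vert_p)$, whose unique minimizer is $\theta_0$ by~C2; pointwise convergence of convex functions forces uniform convergence on compacts, and coercivity of $M$ then yields $\hat{\theta}_n\overset{P}{\to}\theta_0$. \emph{Empirical process input:} one shows that $\mathcal{F}_\delta=\{\psi(\cdot,\theta):\Vert\theta-\theta_0\Vert\le\delta\}$ is $P_0$-Donsker for some small $\delta>0$ and $P_0\Vert\psi(\cdot,\theta)-\psi(\cdot,\theta_0)\Vert^2\to0$ as $\theta\to\theta_0$; each $\psi_j$ is bounded by $1$ and, off the $P_0$-null set $\{x=\theta\}\cup\bigcup_j\{x_j=\theta_j\}$ (null by~C1), $\theta\mapsto\psi_j(x,\theta)$ is smooth, so a bracketing / VC-subgraph bound together with the integrability afforded by~C1 gives both. \emph{Derivative of $\Psi_0$:} one verifies that $\Psi_0(\theta)=P_0\psi(\cdot,\theta)$ is differentiable at $\theta_0$ with the stated $\dot{\Psi}_0=P_0\dot{\psi}_{x,0}$, justifying the interchange of $\partial_\theta$ and $\int dP_0$ by splitting off a small ball around $\theta_0$ and using that $\dot{\psi}_{x,\theta}$ is homogeneous of degree $-1$ in $x-\theta$ with the bound $\Vert\dot{\psi}_{x,\theta}\Vert\lesssim\Vert x-\theta\Vert_p^{-1}$ (for $p\ge2$), which is $P_0$-integrable near $\theta_0$ when $k\ge2$ by~C1; nonsingularity of $\dot{\Psi}_0$ follows because each $\dot{\psi}_{x,0}$ is the Hessian of the convex map $\theta\mapsto\Vert x-\theta\Vert_p$ at $\theta_0$, hence positive semidefinite, and no nonzero direction can lie in its kernel for $P_0$-a.e.\ $x$ when $P_0$ is absolutely continuous. \emph{Conclusion:} the $Z$-estimator master theorem (cf.\ Ghosal and van der Vaart \citep{ghosal2017fundamentals}) gives $\sqrt{n}(\hat{\theta}_n-\theta_0)=-\dot{\Psi}_0^{-1}\,n^{-1/2}\sum_{i=1}^n\psi(X_i,\theta_0)+o_P(1)$, and the multivariate CLT (legitimate since $P_0\psi(\cdot,\theta_0)=0$ and $\psi$ is bounded) yields the limit $\mathrm{N}_k(0,\dot{\Psi}_0^{-1}\Sigma_0\dot{\Psi}_0^{-1})$ with $\Sigma_0=P_0[\psi(\cdot,\theta_0)\psi(\cdot,\theta_0)^T]$. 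A convexity-based argument in the spirit of Hjort--Pollard / Niemiro could replace this last step.

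For part~(ii), first replace $P$ by $\mathbb{B}_n$. Both $\theta(P)$ and $\theta(\mathbb{B}_n)$ solve, up to an $o_{\mathrm{Pr}}(n^{-1/2})$ term coming from non-differentiability at the atoms, the equation $Q\psi(\cdot,\theta)=0$ for $Q=P$ and $Q=\mathbb{B}_n$ respectively, and both are consistent for $\theta_0$: posterior weak consistency of $\mathrm{DP}(\alpha+n\mathbb{P}_n)$ at $P_0$ plus continuity of the $\ell_1$-median functional under weak convergence (using boundedness of $\Vert X-\theta\Vert_p-\Vert X\Vert_p$) gives $\theta(P)\overset{P}{\to}\theta_0$ in posterior probability, and likewise for $\theta(\mathbb{B}_n)$. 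Since $\Vert P-\mathbb{B}_n\Vert_{\mathrm{TV}}=o_{\mathrm{Pr}}(n^{-1/2})$ and $\psi$ is bounded, $\Vert(P-\mathbb{B}_n)\psi(\cdot,\theta)\Vert=o_{\mathrm{Pr}}(n^{-1/2})$ uniformly in $\theta$; feeding this into a one-term expansion of $\Psi_0$ near $\theta_0$, together with the equicontinuity of $\sqrt{n}(\mathbb{B}_n-P_0)$ over $\mathcal{F}_\delta$ established below, yields $\sqrt{n}(\theta(P)-\theta(\mathbb{B}_n))=o_{\mathrm{Pr}}(1)$, so it suffices to handle $\sqrt{n}(\theta(\mathbb{B}_n)-\hat{\theta}_n)$ conditionally on the data. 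Here I would prove a conditional Donsker theorem: conditionally on the data and in $P_0$-probability, $\sqrt{n}(\mathbb{B}_n-\mathbb{P}_n)$ converges to the $P_0$-Brownian bridge uniformly over $\mathcal{F}_\delta$ --- the exchangeably-weighted bootstrap central limit theorem, the $\mathrm{Dir}(n;1,\dots,1)$ weights satisfying the required moment conditions and producing the correct limiting variance. Combined with the conditional consistency of $\theta(\mathbb{B}_n)$ and the differentiability of $\Psi_0$, this gives the conditional $Z$-expansion $\sqrt{n}(\theta(\mathbb{B}_n)-\theta_0)=-\dot{\Psi}_0^{-1}\sqrt{n}\,\mathbb{B}_n\psi(\cdot,\theta_0)+o_{\mathrm{Pr}}(1)$; subtracting the expansion from part~(i) gives $\sqrt{n}(\theta(\mathbb{B}_n)-\hat{\theta}_n)=-\dot{\Psi}_0^{-1}\sqrt{n}(\mathbb{B}_n-\mathbb{P}_n)\psi(\cdot,\theta_0)+o_{\mathrm{Pr}}(1)$, and the conditional CLT $\sqrt{n}(\mathbb{B}_n-\mathbb{P}_n)\psi(\cdot,\theta_0)\rightsquigarrow\mathrm{N}_k(0,\Sigma_0)$ (in $P_0$-probability) delivers part~(ii).

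I expect the main obstacle to be the dimension- and $p$-sensitive bookkeeping near the singularities of $\psi$ and $\dot{\psi}$: $\psi$ is bounded but not Lipschitz and is genuinely singular at $\{x=\theta\}$ and along the hyperplanes $\{x_j=\theta_j\}$, so the smooth-$Z$-estimator toolkit applies only after one carefully verifies the Donsker property, the $L^2(P_0)$-equicontinuity of $\mathcal{F}_\delta$, and the differentiability of $\Psi_0$ with invertible $\dot{\Psi}_0$. The radial factor $\Vert x-\theta_0\Vert_p^{-1}$ carried by $\dot{\psi}_{x,0}$ is $P_0$-integrable exactly when $k\ge2$; the diagonal factors $\vert x_j-\theta_{0j}\vert^{p-2}/\Vert x-\theta_0\Vert_p^{p-2}$ stay bounded, and $\psi$ remains Lipschitz transversally to $\{x_j=\theta_{0j}\}$, only when $p\ge2$, which is what pins the hypothesis $p\ge2$ in general dimension; when $k=2$ the planar geometry of the singular set makes these angular contributions integrable for every $1<p<\infty$, which is why (i) and (ii) then extend to all $1<p<\infty$. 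Once these regularity facts and the conditional Donsker theorem are in place, the Bayesian-bootstrap transfer in part~(ii) is comparatively routine.
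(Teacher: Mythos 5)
Your outline follows the paper's own route: view $\hat{\theta}_n$ and its Bayesian-bootstrap counterpart $\theta(\mathbb{B}_n)$ as $Z$-estimators solving $Q\psi(\cdot,\theta)=0$, invoke the Wellner--Zhan exchangeably-weighted bootstrap $Z$-estimator theorem to get both limits simultaneously, and use $\Vert P-\mathbb{B}_n\Vert_{\mathrm{TV}}=o_{\mathrm{Pr}}(n^{-1/2})$ together with the boundedness of $\psi$ to transfer the conclusion from $\theta(\mathbb{B}_n)$ to the posterior draw $\theta(P)$. Your minor variants --- consistency via convexity and coercivity instead of the paper's Glivenko-Cantelli plus argmax theorem, and an explicit estimate $\sqrt{n}(\theta(P)-\theta(\mathbb{B}_n))=o_{\mathrm{Pr}}(1)$ instead of the paper's Lemma~7.1 recognizing $\theta(P)$ directly as an asymptotic bootstrap $Z$-estimator --- are both legitimate and arguably cleaner. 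So the architecture is sound.

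The genuine gap is the Donsker verification, which you compress into ``a bracketing / VC-subgraph bound together with the integrability afforded by~C1.'' This is precisely the step the paper has to work hardest on, and the difficulty is not where you locate it. You attribute the hypothesis $p\ge2$ to the angular/Lipschitz behavior of $\psi$ near the hyperplanes $\{x_j=\theta_j\}$, but the paper's actual mechanism is different: it establishes that $\mathcal{F}_{1R}=\{|x_j-\theta_j|^{p-1}/\Vert x-\theta\Vert_p^{p-1}:\Vert\theta-\theta_0\Vert\le R,\,j\}$ is a bounded VC-major class by expanding $(x_1-\theta_1)^p$ and $\sum_{j\ge2}(x_j-\theta_j)^p$ binomially, so that the superlevel sets $\{x:|x_1-\theta_1|^p > c\sum_{j\ge 2}|x_j-\theta_j|^p\}$ reduce, after an exhaustive case analysis over sign patterns, to sets of the form $\{g_a>0\}$ with $g_a$ ranging over a fixed finite-dimensional space of polynomials. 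This requires $p$ to be a \emph{positive integer}, not merely $p\ge2$; the theorem's hypothesis ``$p\ge2$ a fixed integer'' is doing exactly this work, and the separate $k=2$, general $p>1$ claim is proved by an entirely different VC reduction (absolute value inequalities in two coordinates reduce to linear half-planes). Your proposal does not supply any concrete entropy bound for the ratio $|x_j-\theta_j|^{p-1}/\Vert x-\theta\Vert_p^{p-1}$ and does not explain how a bracketing argument would avoid the integer restriction --- if it could, the theorem would not need that hypothesis. To make the proposal complete you would need either to reproduce the polynomial VC-major argument (hence accept the integer restriction) or to exhibit a genuinely different uniform-entropy bound valid for non-integer $p$, and you should also handle the measurability requirement ($\mathcal{F}_R\in\mathfrak{m}(P_0)$ via stochastic separability) that the paper checks before invoking Wellner--Zhan.
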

The uniqueness holds unless $P_0$ is completely supported on a straight line in $\mathbb{R}^k$, for $k\geq 2$ (Section 3, Chaudhuri \citep{chaudhuri1996geometric}). Finding the asymptotic distribution of $\sqrt{n}(\hat{\theta}_n-\theta_0)$ can be viewed as an application of the problem of finding the asymptotic distribution of a $Z$-estimator centered at its true value. The asymptotic theory of the $Z$-estimators has been studied extensively in the literature. Huber \citep{huber1967behavior} proved the asymptotic normality of $Z$-estimators when the parameter space is finite-dimensional. Van der Vaart \citep{van1995efficiency} extended Huber's \citep{huber1967behavior} theorem to the infinite-dimensional case. \par
Here we view $\theta(\mathbb{B}_n)$ as a bootstrapped version of the estimator $\hat{\theta}_n$, where the bootstrap weights are drawn from a $\mathrm{Dir}(n;1,1,\dots,1)$ distribution. In other words, $\theta(\mathbb{B}_n)$ satisfies the system of equations $\hat{\Psi}_n(\theta)=\mathbb{B}_n\psi(\cdot,\theta)=0$. Wellner and Zhan \citep{wellner1996bootstrapping} extended van der Vaart's \citep{van1995efficiency} infinite-dimensional $Z$-estimator theorem by showing that for any exchangeable vector of nonnegative bootstrap weights, the bootstrap analog of a $Z$-estimator conditional on the observations is also asymptotically normal. We use Wellner and Zhan's \citep{wellner1996bootstrapping} theorem to prove the asymptotic normality of $\theta(\mathbb{B}_n)$. Wellner and Zhan's \citep{wellner1996bootstrapping} theorem ensures that both $\sqrt{n}(\hat{\theta}_n-\theta_0)$, and $\sqrt{n}(\theta(P) - \hat{\theta}_n)$ given the data, converge in distribution to the same normal limit, thus proving Theorem 3.1. In the next subsection, we formally state Wellner and Zhan's \citep{wellner1996bootstrapping} theorem. In Section 7, we provide a detailed verification of the conditions of Wellner and Zhan's \citep{wellner1996bootstrapping} theorem in our situation.
\subsection{Bootstrapping a $Z$-estimator}
\label{sec: Bootstrapping}
Let $W_n = (W_{n1},W_{n2},\dots,W_{nn})$ be a set of bootstrap weights. The bootstrap empirical measure is defined as $\hat{\mathbb{P}}_n = n^{-1}\sum_{i=1}^nW_{ni}\delta_{X_i}$.
Wellner and Zhan \citep{wellner1996bootstrapping} assumed that the bootstrap weights $W= \{W_{ni},\ i=1,2,\dots,n,\ n=1,2,\dots \}$ form a triangular array defined on a probability space $(\mathfrak{Z},\mathscr{E},\hat{P})$. Thus $\hat{P}$ refers to the distribution of the bootstrap weights. According to Wellner and Zhan \citep{wellner1996bootstrapping}, the following conditions are imposed on the bootstrap weights:
\begin{enumerate}[(i)]
\item The vectors $W_n = (W_{n1},W_{n2},\dots,W_{nn})^T$ are exchangeable for every $n$, i.e., for any permutation $\pi=(\pi_1,\dots,\pi_n)$ of $\{1,2,\dots,n\}$, the joint distribution of $\pi(W_n)=(W_{n\pi_1},W_{n\pi_2},\dots,W_{n\pi_n})^T$ is same as that of $W_n$.
\item The weights $W_{ni} \geq 0$ for every $n,\ i$ and $\sum_{i=1}^nW_{ni}=n$ for all $n$.
\item The $L_{2,1}$ norm of $W_{n1}$ is uniformly bounded: for some $0<K<\infty$
\begin{equation}
\Vert W_{n1} \Vert_{2,1} = \int_0^{\infty}\sqrt{\hat{P}(W_{n1} \geq u)}\diff u \leq K.
\end{equation}
\item $\lim_{\lambda \rightarrow \infty}\limsup_{n \rightarrow \infty} \sup_{t \geq \lambda}(t^2\hat{P}\{W_{n1} \geq t)\}=0$.
\item $n^{-1}\sum_{i=1}^n(W_{ni}-1)^2 \rightarrow c^2>0$ in $\hat{P}$-probability for some constant $c>0$.
\end{enumerate}
Van der Vaart and Wellner \citep{van1996weak} noted that if $Y_1,\dots, Y_n$ are exponential random variables with mean 1, then the weights $W_{ni}=Y_i/\bar{Y}_n,\ i=1,\dots,n$, satisfy conditions (\romannumeral 1)--(\romannumeral 5), resulting in the Bayesian bootstrap scheme with $c=1$ because the left hand side in (\romannumeral 5) is given by $n^{-1}\sum_{i=1}^n(Y_i-\bar{Y}_n)^2/\bar{Y}_n^2\overset{P}\to\mathrm{Var}(Y)/\{\mathrm{E}(Y)\}^2=1$. To apply the bootstrap theorem, we also need to assume that the function class
\begin{equation}
\mathcal{F} = \mathcal{F}_R = \{\psi_j(\cdot,\theta):\Vert \theta - \theta_0 \Vert \leq R,\ j=1,2,\dots,k\}
\end{equation}
has \enquote{enough measurability} for randomization with independently and identically distributed multipliers to be possible and Fubini's theorem can be used freely. We call a function class $\mathcal{F} \in \mathfrak{m}(P)$ if $\mathcal{F}$ is countable, or if the empirical process $\mathbb{G}_n=\sqrt{n}(\mathbb{P}_n-P)$ is stochastically separable, or $\mathcal{F}$ is image admissible Suslin (see Chapter 5, Dudly \citep{dudley2014uniform} for a definition of the last). Now we formally state Wellner and Zhan's \citep{wellner1996bootstrapping} theorem for a sequence of consistent asymptotic bootstrap $Z$-estimators $\doublehat{\theta}_n$ of $\theta \in \mathbb{R}^k$, which satisfies the system of equations $\hat{\Psi}_n(\theta)=\hat{P}_n\psi(\cdot,\theta)=0$. 
\begin{theorem}[Wellner and Zhan] Assume that the class of functions $\mathcal{F} \in \mathfrak{m}(P_0)$ and the following conditions hold.
\begin{enumerate}[{\normalfont 1.}] 
\item There exists a $\theta_0 \equiv \theta(P_0)$ such that
\begin{equation}
\Psi(\theta_0)=P_0\psi(X,\theta_0)=0.
\end{equation}
The function $\Psi(\theta)=P_0\psi(X,\theta)$ is differentiable at $\theta_0$ with nonsingular derivative matrix $\dot{\Psi}_0$:
\begin{equation}
\dot{\Psi}_0=\Big[\frac{\partial{\Psi}}{\partial{\theta}}\Big]_{\theta=\theta_0}.
\end{equation}
\item
For any $\delta_n \rightarrow 0$, 
\begin{equation}
\sup \Big \{\frac{\Vert \mathbb{G}_n(\psi(\cdot,\theta)-\psi(\cdot,\theta_0))\Vert}{1+\sqrt{n}\Vert \theta - \theta_0 \Vert} : \Vert \theta - \theta_0 \Vert \leq \delta_n \Big\} = o_{P_0}(1).
\end{equation}
\item
The $k$-vector of functions $\psi$ is square-integrable at $\theta_0$ with covariance matrix
\begin{equation}
\Sigma_0 = P_0\psi(X,\theta_0)\psi^{T}(X,\theta_0) < \infty.
\end{equation}
For any $\delta_n \rightarrow 0$, the envelope functions 
\begin{equation}
D_n(x) = \sup \Big \{\frac{\vert \psi_j(x,\theta)-\psi_j(x,\theta_0)\vert}{1+\sqrt{n}\Vert \theta - \theta_0 \Vert} : \Vert \theta - \theta_0 \Vert \leq \delta_n,\  j=1,\ 2,\dots,k \Big\}
\end{equation}
satisfy 
\begin{equation}
\lim_{\lambda \rightarrow \infty}\limsup_{n \rightarrow \infty}\sup_{t \geq \lambda}t^2P_0(D_n(X_1)>t)=0.
\end{equation}
\item
The estimators $\doublehat{\theta}_n$ and $\hat{\theta}_n$ are consistent for $\theta_0$, i.e., $\Vert \hat{\theta}_n - \theta_0\Vert \overset{P_0}\to 0$ and $\Vert \doublehat{\theta}_n - \hat{\theta}_n \Vert \overset{\hat{P}}\to 0$ in $P_0$-probability.
\item
The bootstrap weights satisfy conditions \textup{(\romannumeral 1)}--\textup{(\romannumeral 5)}.
\end{enumerate}
Then
\begin{enumerate}[label={\upshape(\roman*)}]
\item $\sqrt{n}(\hat{\theta}_n-\theta_0) \rightsquigarrow \mathrm{N}_k(0,\dot{\Psi}_0^{-1}\Sigma_0\dot{\Psi}_0^{-1})$; 
\item $\sqrt{n}(\doublehat{\theta}_n-\hat{\theta}_n) \rightsquigarrow \mathrm{N}_k(0,c^2\dot{\Psi}_0^{-1}\Sigma_0\dot{\Psi}_0^{-1})$ in $P_0$- probability.
\end{enumerate}
\end{theorem}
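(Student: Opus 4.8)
Since this is Wellner and Zhan's theorem, the paper may simply cite it; here is how I would reconstruct the argument. The plan is to run the classical $Z$-estimator linearization twice: once for $\hat\theta_n$, driven by the empirical process $\mathbb{G}_n=\sqrt n(\mathbb{P}_n-P_0)$, and once for $\doublehat{\theta}_n$, driven by the multiplier (bootstrap) process $\hat{\mathbb{G}}_n=\sqrt n(\hat{\mathbb{P}}_n-\mathbb{P}_n)=n^{-1/2}\sum_{i=1}^n(W_{ni}-1)\delta_{X_i}$. In each case I would start from the defining equation --- $\mathbb{P}_n\psi(\cdot,\hat\theta_n)=0$, respectively $\hat{\mathbb{P}}_n\psi(\cdot,\doublehat{\theta}_n)=0$ --- and split it into an empirical-process increment controlled by a stochastic-equicontinuity hypothesis and a drift term handled by the differentiability of $\Psi$ at $\theta_0$.

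For (i): from $\mathbb{P}_n\psi(\cdot,\hat\theta_n)=0$ and $\Psi(\theta_0)=0$ I would write $0=n^{-1/2}\mathbb{G}_n\psi(\cdot,\hat\theta_n)+\Psi(\hat\theta_n)$, use consistency (Condition 4) with Condition 2 to get $\mathbb{G}_n\psi(\cdot,\hat\theta_n)=\mathbb{G}_n\psi(\cdot,\theta_0)+o_{P_0}(1+\sqrt n\Vert\hat\theta_n-\theta_0\Vert)$, and use Condition 1 to get $\Psi(\hat\theta_n)=\dot\Psi_0(\hat\theta_n-\theta_0)+o(\Vert\hat\theta_n-\theta_0\Vert)$. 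Since $\mathbb{G}_n\psi(\cdot,\theta_0)=O_{P_0}(1)$ by the multivariate CLT (Condition 3 guarantees the covariance), multiplying by $\sqrt n$, rearranging, and using nonsingularity of $\dot\Psi_0$ would first give the rate $\Vert\hat\theta_n-\theta_0\Vert=O_{P_0}(n^{-1/2})$ and then the expansion $\sqrt n(\hat\theta_n-\theta_0)=-\dot\Psi_0^{-1}\mathbb{G}_n\psi(\cdot,\theta_0)+o_{P_0}(1)$; combined with $\mathbb{G}_n\psi(\cdot,\theta_0)\rightsquigarrow\mathrm N_k(0,\Sigma_0)$ this yields (i).

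For (ii): conditionally on $X_1,\dots,X_n$, $\doublehat{\theta}_n$ solves $0=\hat{\mathbb{P}}_n\psi(\cdot,\doublehat{\theta}_n)=\mathbb{P}_n\psi(\cdot,\doublehat{\theta}_n)+n^{-1/2}\hat{\mathbb{G}}_n\psi(\cdot,\doublehat{\theta}_n)$. Subtracting $\mathbb{P}_n\psi(\cdot,\hat\theta_n)=0$ and writing $\mathbb{P}_n\psi(\cdot,\doublehat{\theta}_n)-\mathbb{P}_n\psi(\cdot,\hat\theta_n)=n^{-1/2}\mathbb{G}_n\{\psi(\cdot,\doublehat{\theta}_n)-\psi(\cdot,\hat\theta_n)\}+\Psi(\doublehat{\theta}_n)-\Psi(\hat\theta_n)$, I would handle the $\mathbb{G}_n$-increments by Condition 2, the drift by Condition 1, and reduce to controlling $\hat{\mathbb{G}}_n\psi(\cdot,\doublehat{\theta}_n)$. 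Two new ingredients are needed. First, a \emph{conditional multiplier CLT}: $\hat{\mathbb{G}}_n\psi(\cdot,\theta_0)\rightsquigarrow\mathrm N_k(0,c^2\Sigma_0)$ conditionally on the data, in $P_0$-probability, which I would derive from a conditional Lindeberg argument for the exchangeable nonnegative weights --- Condition (iii) (the $L_{2,1}$-bound) and Condition (iv) providing uniform integrability, Condition (v) the scaling $c^2$, and $\mathbb{P}_n\{\psi(\cdot,\theta_0)\psi^T(\cdot,\theta_0)\}\to\Sigma_0$ (Condition 3 and the law of large numbers) the variance. Second, a \emph{bootstrap equicontinuity estimate}, $\sup\{\Vert\hat{\mathbb{G}}_n\{\psi(\cdot,\theta)-\psi(\cdot,\theta_0)\}\Vert/(1+\sqrt n\Vert\theta-\theta_0\Vert):\Vert\theta-\theta_0\Vert\le\delta_n\}=o_{\hat P}(1)$ in $P_0$-probability, which I would obtain from Condition 2 by a multiplier inequality (symmetrization adapted to the exchangeable weights, with $\mathcal F\in\mathfrak m(P_0)$ supplying the measurability/Fubini steps and the envelope tail bound of Condition 3 controlling the multiplier-weighted suprema). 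Assembling these as in (i) gives $\sqrt n(\doublehat{\theta}_n-\hat\theta_n)=-\dot\Psi_0^{-1}\hat{\mathbb{G}}_n\psi(\cdot,\theta_0)+o_{\hat P}(1)$ in $P_0$-probability, and the conditional CLT finishes (ii).

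The hard part will be the second ingredient: upgrading the i.i.d.\ equicontinuity hypothesis (Condition 2) to its multiplier-weighted form. Because the weights $W_{ni}$ are only exchangeable, not i.i.d., the usual multiplier inequalities must be adapted, and it is precisely here that Conditions (iii)--(v), the envelope tail bound of Condition 3, and the measurability hypothesis $\mathcal F\in\mathfrak m(P_0)$ are all used; one must also be careful to keep every ``$o$'' and ``$O$'' in the correct mode of convergence (conditionally on the data, in outer $P_0$-probability). Everything else --- the two linearizations and the two central limit theorems --- is routine once this bootstrap equicontinuity estimate is available.
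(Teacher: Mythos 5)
The paper does not prove this statement at all: Theorem~3.2 is quoted verbatim from Wellner and Zhan (1996) and used as a black box, and everything in Section~7 is a verification of its hypotheses for the particular score $\psi$ of the $\ell_1$-median. Your reconstruction is therefore not in competition with anything in the paper; judged on its own, it faithfully mirrors the strategy of the original source. Part (i) is van der Vaart's $Z$-estimator master theorem exactly as you describe it (linearization via Condition~2 and differentiability, first extracting the $\sqrt n$-rate, then the asymptotic expansion $\sqrt n(\hat\theta_n-\theta_0)=-\dot{\Psi}_0^{-1}\mathbb{G}_n\psi(\cdot,\theta_0)+o_{P_0}(1)$). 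Part (ii) repeats the linearization for the weighted process $\hat{\mathbb{G}}_n=\sqrt n(\hat{\mathbb{P}}_n-\mathbb{P}_n)$ and rests on precisely the two lemmas you isolate: a conditional CLT for $\hat{\mathbb{G}}_n\psi(\cdot,\theta_0)$ under exchangeable weights (this is Praestgaard and Wellner's exchangeable-bootstrap CLT, consuming the weight conditions and the finiteness of $\Sigma_0$), and a bootstrap asymptotic-equicontinuity estimate upgraded from Condition~2 by a multiplier inequality (this is where the $L_{2,1}$ bound, the tail conditions on the weights and on the envelope $D_n$, and the measurability class $\mathfrak{m}(P_0)$ are spent). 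You correctly flag that the multiplier inequality for merely exchangeable, nonnegative weights is the one non-routine ingredient; a complete write-up would have to reproduce that inequality (Lemma~4.1 of Praestgaard and Wellner) or an equivalent symmetrization argument, and would also need to record the intermediate rate $\Vert\doublehat{\theta}_n-\hat\theta_n\Vert=O_{\hat P}(n^{-1/2})$ in $P_0$-probability before closing the expansion, but as a blueprint the proposal is sound and consistent with how the cited theorem is actually proved.
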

It has already been mentioned that, for the Bayesian bootstrap weights, the value of the constant $c$ is $1$. Thus if $\psi(\cdot,\theta)$ defined in (2.2) satisfies the conditions in Theorem 3.2, Theorem 3.1 holds.
\section{Affine-equivariant Multivariate $\ell_1$-median}
\label{sec:Transformation}
Although the sample multivariate $\ell_1$-median is equivariant under location transformation and orthogonal transformation of the data, it is not equivariant under arbitrary affine transformation of the data. Chakraborty and Chaudhuri (\citep{chakraborty1996transformation}, \citep{chakraborty1998adaptive}) used a data-driven transformation-and-retransformation technique to convert the non-equivariant coordinatewise median to an affine-equivariant one. Chakraborty, Chaudhuri and Oja \citep{chakraborty1998operating} applied the same idea to the sample spatial median. \par
We use the transformation-and-retransformation technique to define an affine equivariant version of the multivariate $\ell_1$-median. Suppose that we have $n$ sample points $X_1,X_2,\dots,X_n \in \mathbb{R}^k$, with $n>k+1$. We consider the points $X_{i_0},X_{i_1},\dots,X_{i_k}$, where $\alpha =\{ i_0,i_1,\dots,i_k\}$ is a subset of $\{1,2,\dots,n\}$. The matrix $X(\alpha)$ consisting the columns $X_{i_1}-X_{i_0},X_{i_2}-X_{i_0},\dots, X_{i_k}-X_{i_0}$ is the data-driven transformation matrix. The transformed data points are $Z_j^{(\alpha)}= \{X(\alpha)\}^{-1}X_j$, $j \notin \alpha$. The matrix $X(\alpha)$ is invertible if $X_i,\ i=1,\dots,n,$ are independently and identically distributed samples from a distribution that is absolutely continuous with respect to the Lebesgue measure on $\mathbb{R}^k$. The sample $\ell_1$-median based on the transformed observations is then given by
\begin{equation}
\hat{\phi}_n^{(\alpha)}=\argmin_{\phi}\sum_{j \notin \alpha}\Vert Z_j^{(\alpha)}-\phi \Vert.
\end{equation}
We transform it back in terms of the original coordinate system as
\begin{equation}
\hat{\theta}_n^{(\alpha)}=X(\alpha)\hat{\phi}_n^{(\alpha)}.
\end{equation}
It can be shown that $\hat{\theta}_n^{(\alpha)}$ is affine equivariant. Chakraborty, Chaudhuri and Oja \citep{chakraborty1998operating} also noted that $X(\alpha)$ should be chosen in such a way that the matrix $\{X(\alpha)\}^T\Sigma^{-1}X(\alpha)$ is as close as possible to a matrix of the form $\lambda I_k$ where $\Sigma$ is the covariance matrix of $X$. The estimator $\hat{\theta}_n^{(\alpha)}$ does not have any population analog. Chakraborty, Chaudhui and Oja \citep{chakraborty1998operating} proved that conditional on $X(\alpha)$, the asymptotic distribution of the transformed-and-retransformed spatial median is normal.
\subsection{Bernstein-von Mises theorem for the affine-equivariant multivariate median}
Let $X_1,X_2,\dots,X_n \in \mathbb{R}^k$ be a random sample from a distribution $P$ that is absolutely continuous with respect to the Lebesgue measure on $\mathbb{R}^k$. Let $X(\alpha)$ be the transformation matrix and $Z_j^{(\alpha)}=\{X(\alpha)\}^{-1}X_j,\ j \notin \alpha$, be the transformed observations. The sample median of $X_1,\dots,X_n$ is denoted by $\hat{\theta}_n$.\par
Let the distribution of $Z_j^{(\alpha)},\ j \notin \alpha$, be denoted by $P_Z$. We equip $P_Z$ with a $\mathrm{DP}(\beta)$ prior. The true value of $P_Z$ is denoted by $P_{Z0}$, i.e., the distribution of $Z$ when $X \sim P_0$. Hence the Bayesian model can be described as 
\begin{equation}
Z_j^{(\alpha)}|P_{Z} \overset{iid}\sim P_Z,\quad P_Z \sim \mathrm{DP}(\beta),\ j \notin \alpha,
\end{equation}
which implies that
\begin{equation}
P_Z|\{Z_j^{(\alpha)}: j \notin \alpha\} \sim \mathrm{DP}(\beta+\sum_{j \notin \alpha}\delta_{Z_j}).
\end{equation}
Following the same arguments used in Section 2, it can be noted that since $P_Z\vert \{Z_j^{(\alpha)}:j \notin \alpha\} \sim \mathrm{DP}(\beta+\sum_{j \notin \alpha}\delta_{Z_j})$, $\Vert P_Z-\mathbb{B}_n \Vert_{\mathrm{TV}}=o_{\mathrm{Pr}}(n^{-1/2})$ a.s. $[P_{Z0}^{\infty}]$, with $\mathrm{Pr}=P_Z^{\infty}\times \mathbb{B}_n$. As in Theorem 3.1, the posterior distribution of $P_Z$ can be approximated by the Bayesian bootstrap process $\mathbb{B}_n$. Define
\begin{align}
\phi^{(\alpha)}(\mathbb{B}_n)&=\argmin_{\phi}\mathbb{B}_n\Vert Z^{(\alpha)}-\phi \Vert_p,\\
\phi^{(\alpha)}(P_Z) &= \argmin_{\phi}\big\{P_{Z}(\Vert Z^{(\alpha)}-\phi \Vert_p-\Vert Z^{(\alpha)}\Vert_p)\big\}.
\end{align}
Thus the transformed-and-retransformed medians are given by
\begin{equation}
\hat{\theta}_n^{(\alpha)}=X(\alpha) \hat{\phi}_n^{(\alpha)},\quad \theta^{(\alpha)}(\mathbb{B}_n)=X(\alpha)\phi^{(\alpha)}(\mathbb{B}_n).
\end{equation}
Also define $\theta^{(\alpha)}(P)=X(\alpha)\phi^{(\alpha)}(P_Z)$. We view $\hat{\phi}_n^{(\alpha)}$ as a $Z$-estimator satisfying $\Psi_{Z_n}(\phi)=\mathbb{P}_n\psi_Z(\cdot,\phi)=0$. The \enquote{population version} of $\Psi_{Z_n}(\phi)$ is denoted by $\Psi_Z(\phi)=P\psi_Z(\cdot,\phi)$. The real-valued elements of $\psi_Z(z,\phi)$ are then given by
\begin{equation}
\psi_{Z;j}(z,\phi) =\frac{\vert z_j-\phi_j \vert^{p-1}}{\Vert z-\phi\Vert_p^{p-1}}\sign(\phi_j-z_j), \quad j=1,\dots,k.
\end{equation}
Let $\phi_0^{(\alpha)}\equiv\phi^{(\alpha)}(P_{Z0})$ satisfy $\Psi_{Z0}(\phi^{(\alpha)})=P_{Z0}\psi_Z(\cdot,\phi^{(\alpha)})=0$. In the following, we denote $\dot{\Psi}^{(\alpha)}_{Z0}=\Big[\partial{\Psi}_{Z0}/{\partial{\phi}}\Big]_{\phi=\phi_0^{(\alpha)}}$
and $\Sigma^{(\alpha)}_{Z0}=P_{Z0}\psi_Z(\cdot,\phi_0^{(\alpha)})\psi_Z^{T}(\cdot,\phi_0^{(\alpha)})$. 
\begin{theorem}
Let $p \geq 2$ be a fixed integer. For $k \geq 2$ and a given subset $\alpha=\{i_0,i_1,\dots,i_k\}$ of $\{1,2,\dots,n\}$ with size $k+1$, suppose that the following conditions hold.
\begin{enumerate}[\normalfont C1.]
\item The true distribution of $Z^{(\alpha)}$, $P_{Z0}$ has a density which is bounded on compact subsets of $\mathbb{R}^k$.
\item The $\ell_1$-median of $P_{Z0}$ is unique.
\end{enumerate}
Then
\begin{enumerate}[label={\upshape(\roman*)}]
\item $\sqrt{n}(\hat{\theta}_n^{(\alpha)}-\theta^{(\alpha)}(P_0))\vert \{X_i:i \in \alpha\} \rightsquigarrow \mathrm{N}_k(0, X(\alpha)\{\dot{\Psi}^{(\alpha)}_{Z0}\}^{-1}\Sigma^{(\alpha)}_{Z0} \{\dot{\Psi}^{(\alpha)}_{Z0}\}^{-1}$ $\{X(\alpha)\}^T)$;
\item $\sqrt{n}(\theta^{(\alpha)}(P)-\hat{\theta}_n^{(\alpha)}) \rightsquigarrow \mathrm{N}_k(0,X(\alpha)\{\dot{\Psi}^{(\alpha)}_{Z0}\}^{-1}\Sigma^{(\alpha)}_{Z0} \{\dot{\Psi}^{(\alpha)}_{Z0}\}^{-1}\{X(\alpha)\}^T)$ in $P_0$-probability.
 Here $\dot{\Psi}^{(\alpha)}_{Z0}=\int \dot{\psi}_{Z,0}\ dP_{Z0}$, where
\begin{equation}
\dot{\psi}_{Z,0}=\Big[\frac{\partial{\psi_Z(z,\phi)}}{\partial{\phi}}\Big]_{\phi=\phi_0^{(\alpha)}}.
\end{equation}
The matrix $\dot{\psi}_{Z,0}$ is given by
\begin{align*}
\dot{\psi}_{Z,0}=\frac{p-1}{\Vert z-\phi_0^{(\alpha)}\Vert_p}\Big[\diag \left(\frac{\vert z_1-\phi_{01}^{(\alpha)}\vert^{p-2}}{\Vert z-\phi_0^{(\alpha)}\Vert_p^{p-2}},\dots,\frac{\vert z_k-\phi_{0k}^{(\alpha)}\vert^{p-2}}{\Vert z-\phi_0^{(\alpha)}\Vert_p^{p-2}}\right) -\\ \frac{yy^T}{\Vert z-\phi_0^{(\alpha)}\Vert_p^{2(p-1)}}\Big],
\end{align*}
with $y$ given by
\begin{equation}
y = \begin{bmatrix}
\vert z_1-\phi_{01}^{(\alpha)} \vert^{p-1}\sign(z_1-\phi_{01}^{(\alpha)}),\dots,\vert z_k-\phi_{0k}^{(\alpha)} \vert^{p-1}\sign(z_k-\phi_{0k}^{(\alpha)})
\end{bmatrix}^{T},
\end{equation}
and $$\Sigma_{Z0}^{(\alpha)}=\frac{yy^T}{\Vert z-\phi_{0k}^{(\alpha)}\Vert_p^{2(p-1)}}.$$
\end{enumerate}
Further if $k=2$, \textup{(\romannumeral 1)} and \textup{(\romannumeral 2)} hold for any $1<p<\infty$.
\end{theorem}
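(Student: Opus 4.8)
The plan is to obtain Theorem 4.1 as a consequence of Theorem 3.1 applied to the retransformed sample, followed by a deterministic linear push-forward. First I would condition on the sub-sample $\{X_i : i \in \alpha\}$. Since $P_0$ is absolutely continuous with respect to Lebesgue measure, the transformation matrix $X(\alpha)$ is nonsingular with $P_0^\infty$-probability one; on that event $X(\alpha)$ is a fixed invertible matrix and the retransformed points $\{Z_j^{(\alpha)} = \{X(\alpha)\}^{-1}X_j : j \notin \alpha\}$ form an i.i.d. sample from $P_{Z0}$, the image of $P_0$ under $x \mapsto \{X(\alpha)\}^{-1}x$. Conditions C1 and C2 of the theorem are precisely conditions C1 and C2 of Theorem 3.1 for the distribution $P_{Z0}$, and the score $\psi_Z(\cdot,\phi)$ has exactly the same functional form as the score $\psi(\cdot,\theta)$ used for the untransformed median. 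Consequently, the verification of the hypotheses of the Wellner--Zhan theorem (Theorem 3.2) for the pair $(\psi_Z,P_{Z0})$ -- differentiability of $\Psi_Z$ at $\phi_0^{(\alpha)}$ with nonsingular derivative $\dot{\Psi}_{Z0}^{(\alpha)}$, the stochastic-equicontinuity bound, square-integrability together with the envelope-tail condition, consistency of $\hat{\phi}_n^{(\alpha)}$ and $\phi^{(\alpha)}(\mathbb{B}_n)$, and the Bayesian-bootstrap weight conditions (\romannumeral 1)--(\romannumeral 5) with $c=1$ -- is word for word the verification carried out in the proof of Theorem 3.1, now with $(Z,\phi,P_{Z0})$ in place of $(X,\theta,P_0)$; the ``enough measurability'' requirement $\mathcal{F}\in\mathfrak{m}(P_{Z0})$ is inherited because $\{X(\alpha)\}^{-1}$ is a fixed bicontinuous bijection and hence preserves this structure of the function class. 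I would simply invoke that verification rather than repeat it.

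Applying Theorem 3.1 to the i.i.d. sample $\{Z_j^{(\alpha)}\}$ then gives, conditionally on $\{X_i : i \in \alpha\}$,
\[
\sqrt{n}\big(\hat{\phi}_n^{(\alpha)}-\phi_0^{(\alpha)}\big)\ \rightsquigarrow\ \mathrm{N}_k\big(0,\{\dot{\Psi}_{Z0}^{(\alpha)}\}^{-1}\Sigma_{Z0}^{(\alpha)}\{\dot{\Psi}_{Z0}^{(\alpha)}\}^{-1}\big),
\]
together with $\sqrt{n}(\phi^{(\alpha)}(P_Z)-\hat{\phi}_n^{(\alpha)})$ converging to the same normal law in $P_{Z0}$-probability; the latter uses, exactly as recorded just before the theorem, that $P_Z\mid\{Z_j^{(\alpha)}\}\sim\mathrm{DP}(\beta+\sum_{j\notin\alpha}\delta_{Z_j})$ lies within total variation $o_{\mathrm{Pr}}(n^{-1/2})$ of the Bayesian bootstrap process $\mathbb{B}_n$, so the posterior law of $\sqrt{n}(\phi^{(\alpha)}(P_Z)-\hat{\phi}_n^{(\alpha)})$ is asymptotically that of $\sqrt{n}(\phi^{(\alpha)}(\mathbb{B}_n)-\hat{\phi}_n^{(\alpha)})$, to which Theorem 3.2 applies. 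One bookkeeping point: the retransformed sample has $n-k-1$ points rather than $n$, but since $\sqrt{n}/\sqrt{n-k-1}\to 1$ this affects none of the limits.

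Finally I would push these statements forward by the fixed linear map $v\mapsto X(\alpha)v$. Because $\hat{\theta}_n^{(\alpha)}=X(\alpha)\hat{\phi}_n^{(\alpha)}$, $\theta^{(\alpha)}(P)=X(\alpha)\phi^{(\alpha)}(P_Z)$, and $\theta^{(\alpha)}(P_0)=X(\alpha)\phi^{(\alpha)}(P_{Z0})=X(\alpha)\phi_0^{(\alpha)}$, the continuous mapping theorem and the fact that a linear image of a Gaussian is Gaussian yield, conditionally on $\{X_i:i\in\alpha\}$,
\[
\sqrt{n}\big(\hat{\theta}_n^{(\alpha)}-\theta^{(\alpha)}(P_0)\big)\ \rightsquigarrow\ \mathrm{N}_k\big(0,X(\alpha)\{\dot{\Psi}_{Z0}^{(\alpha)}\}^{-1}\Sigma_{Z0}^{(\alpha)}\{\dot{\Psi}_{Z0}^{(\alpha)}\}^{-1}\{X(\alpha)\}^T\big),
\]
and likewise $\sqrt{n}(\theta^{(\alpha)}(P)-\hat{\theta}_n^{(\alpha)})$ to the same law in $P_0$-probability (un-conditioning on $\{X_i:i\in\alpha\}$ by Fubini). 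Since this holds for $P_0^\infty$-almost every realization of $\{X_i:i\in\alpha\}$ -- for each of which $X(\alpha)$ is a legitimate nonsingular transformation matrix -- assertions (i)--(ii) follow, and the case $k=2$ for arbitrary $1<p<\infty$ comes the same way from the corresponding clause of Theorem 3.1. I do not anticipate a genuine obstacle: all the analytic work is already contained in Theorems 3.1--3.2, and the only points needing care are the measurability and a.s.-invertibility that make conditioning on $\{X_i:i\in\alpha\}$ a clean reduction to an i.i.d. problem for $P_{Z0}$, plus the harmless $n$ versus $n-k-1$ adjustment.
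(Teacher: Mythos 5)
Your proposal is correct and follows essentially the same route as the paper: condition on $\{X_i:i\in\alpha\}$ so that $X(\alpha)$ is a fixed invertible matrix and the $Z_j^{(\alpha)}$ are i.i.d.\ from $P_{Z0}$, apply Theorem 3.1 (via the Wellner--Zhan verification, unchanged in form) to that transformed sample, and then push the Gaussian limits forward through the linear map $v\mapsto X(\alpha)v$ using the affine equivariance of the normal family. Your additional remarks on almost-sure invertibility, measurability, and the $n$ versus $n-k-1$ count are harmless refinements of the paper's argument rather than departures from it.
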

The uniqueness holds unless $P_{Z0}$ is completely supported on a straight line in $\mathbb{R}^k$, for $k \geq 2$, (Section 3, Chaudhuri \citep{chaudhuri1996geometric}). It can be noted that the $\mathrm{DP}(\beta)$ prior on $P_Z$ induces the $\mathrm{DP}(\beta \circ \psi^{-1})$ prior on $P \equiv P_Z \circ \psi^{-1}$, where $\psi(Y)=X(\alpha)Y$ with $Y \in \mathbb{R}^k$. Then the proof of the preceding theorem directly follows from Theorem 3.1. From Theorem 3.1, $\sqrt{n}(\hat{\phi}_n^{(\alpha)}-\phi_0^{(\alpha)})$ converges in distribution to $\mathrm{N}_k(0,\{\dot{\Psi}_{Z0}^{(\alpha)}\}^{-1}\Sigma_{Z0}^{(\alpha)} \{\dot{\Psi}_{Z0}^{(\alpha)}\}^{-1})$. Also $\sqrt{n}(\phi^{(\alpha)}(P)-\hat{\phi}_n^{(\alpha)})\vert \{Z_j^{(\alpha)}:j \notin \alpha \}$ converges conditionally in distribution to $\mathrm{N}_k(0,\{\dot{\Psi}_{Z0}^{(\alpha)}\}^{-1}\Sigma_{Z0} ^{(\alpha)}\{\dot{\Psi}_{Z0}^{(\alpha)}\}^{-1})$. Consequently, the conditional distribution of $\{\sqrt{n}(\hat{\theta}_n^{(\alpha)}-\theta^{(\alpha)}(P_0))\vert X_i:i \in \alpha\}$ converges to $\mathrm{N}_k(0,X(\alpha) \{\dot{\Psi}_{Z0}^{(\alpha)}\}^{-1}\Sigma^{(\alpha)}_{Z0}\{\dot{\Psi}_{Z0}^{(\alpha)}\}^{-1}$ $\{X(\alpha)\}^T)$. Also the conditional distribution of $\sqrt{n}(\theta^{(\alpha)}(P)-\hat{\theta}_n^{(\alpha)})$ given $X_1,\dots,$ $X_n$ converges to $\mathrm{N}_k(0,X(\alpha)\linebreak\{\dot{\Psi}_{Z
0}^{(\alpha)}\}^{-1}\Sigma^{(\alpha)}_{Z0}\{\dot{\Psi}_{Z0}^{(\alpha)}\}^{-1}\{X(\alpha)\}^T)$.
Apart from Theorem 3.1, this theorem uses the affine equivariance of the normal family: if a random vector $X \sim \mathrm{N}(\mu,\Sigma)$, then $Y=AX +b \sim \mathrm{N}(A\mu +b,\ A\Sigma A^T)$.\par
Though we are not interested in the case of $p=1$ (which reduces to coordinate-wise median), this may be noted that the conclusions of both Theorem 3.1 and 4.1 hold for that case as well.
\section{Bernstein-von Mises theorem for multivariate quantiles}
\label{BVM quantiles}
Let $X_i$, $i=1,\dots,n$, be independently and identically distributed observations from a $k$-variate distribution $P$ on $\mathbb{R}^k$ and $P$ is given the $\mathrm{DP}(\alpha)$ prior. The next theorem gives the joint posterior asymptotic distribution of the centered quantiles $\{\sqrt{n}(Q_P(u_1)-\hat{Q}_n(u_1)),\dots,\sqrt{n}(Q_P(u_m)-\hat{Q}_n(u_m))\}$ for $u_1,\dots,u_m \in B_q^{(k)}$.\par
Firstly we introduce some notations. For each $u$, the sample $u$-quantile is viewed as a $Z$-estimator that satisfies the system of equations $\Psi_n^{(u)}(\xi)=\linebreak \mathbb{P}_n\psi^{(u)}(\cdot,\xi)=0$. We denote the population version of $\Psi_n^{(u)}(\xi)$ by $\Psi^{(u)}(\xi)=P\psi^{(u)}(\cdot,\xi)$. The true value of $Q_P(u)$ is denoted by $Q_0{(u)}\equiv Q_{P_0}(u)$ and it satisfies the system of equations $\Psi_0^{(u)}(\cdot,\xi)=P_0\psi^{(u)}(\cdot,\xi)=0$. The real-valued components of $\psi^{(u)}(\cdot,\xi)$ are then given by
\begin{equation}
\psi^{(u)}_j(x,\xi) =\frac{\vert x_j-\xi_j \vert^{p-1}}{\Vert x-\xi\Vert_p^{p-1}}\sign(\xi_j-x_j)+u_j, \quad j=1,\dots,k.
\end{equation}
Define 
$\dot{\Psi}_0^{(u)}=\int \dot{\psi}_{x,0}^{(u)}\ dP_0$, where
\begin{equation}
\dot{\psi}_{x,0}^{(u)}=\Big[\frac{\partial{\psi^{(u)}(x,\xi)}}{\partial{\xi}}\Big]_{\xi=Q_0(u)}.
\end{equation}
The matrix $\dot{\psi}_{x,0}^{(u)}$ is given by
\begin{multline}
\dot{\psi}_{x,0}^{(u)}=\frac{p-1}{\Vert x-Q_0(u)\Vert_p}\Big[\diag \left( \frac{\vert x_1-Q_{01}(u)\vert^{p-2}}{\Vert x-Q_0(u)\Vert_p^{p-2}},\dots,\frac{\vert x_k-Q_{0k}(u)\vert^{p-2}}{\Vert x-Q_0(u)\Vert_p^{p-2}}\right) \\- \frac{yy^T}{\Vert x-Q_0(u)\Vert_p^{2(p-1)}}\Big],
\end{multline}
with $y$ given by
\begin{equation}
y = 
\big[\vert x_j-Q_{0j}(u) \vert^{p-1}\sign(x_j-Q_{0j}(u)):j=1,\dots,k\big]^T.
\end{equation}
In the above, $Q_{0j}(u),j=1,\dots,k$ denotes the $j$th component of the vector $Q_0(u)$. We also define $\Sigma_{0;u,v}=P_0\psi^{(u)}(x,Q_0(u))\{\psi^{(v)}(x,Q_0(v))\}^T$.
\begin{theorem}
Let $p \geq 2$ be a fixed integer. Suppose that the following conditions hold for $k \geq 2$.
\begin{enumerate}[\normalfont C1.]
\item The true distribution of $X$, $P_0$ has a density that is bounded on compact subsets of $\mathbb{R}^k$. 
\item For every $u_1,\dots,u_m \in  B_q^{(k)}$, the $u_1,\dots,u_m$-quantiles of $P_0$ are unique. 
\end{enumerate}
Then
\begin{enumerate}[label={\upshape(\roman*)}]
\item the joint distribution of $\big(\sqrt{n}(\hat{Q}_n(u_1)-Q_0(u_1)),\dots,\sqrt{n}(\hat{Q}_n(u_m)-Q_0(u_m))$ converges to a $km$-dimensional normal distribution with mean zero, and the $(j,l)$th block of the covariance matrix is given by $\{\dot{\Psi}_0^{(u_j)}\}^{-1}\Sigma_{0;u_j,u_l}\linebreak\{\dot{\Psi}_0^{(u_l)}\}^{-1}$, $1\leq j,\ l \leq m$;
\item given $X_1,\dots,X_n$, the posterior joint distribution of $\{\sqrt{n}(Q_P(u_1)-\hat{Q}_n(u_1)),\linebreak\dots,\sqrt{n}(Q_P(u_m)-\hat{Q}_n(u_m))\}$ converges to $km$-dimensional normal distribution with mean zero, and the $(j,l)$th block of the covariance matrix is given by $\{\dot{\Psi}_0^{(u_j)}\}^{-1}\Sigma_{0;u_j,u_l}\{\dot{\Psi}_0^{(u_l)}\}^{-1}$, $1\leq j,\ l \leq m$.
\end{enumerate}
Further if $k=2$, \textup{(\romannumeral 1)} and \textup{(\romannumeral 2)} hold for any $1<p<\infty$.
\end{theorem}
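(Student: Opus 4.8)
The strategy is to reduce Theorem~5.1 to the stacked $Z$-estimation framework of Theorem~3.2 in exactly the way Theorem~3.1 was obtained from it, the one new feature being that the $m$ quantile equations must be solved \emph{simultaneously}, with a single common set of Bayesian bootstrap weights, so that the cross-covariances $\Sigma_{0;u_j,u_l}$ appear in the limit. Concretely, write $\xi=(\xi_1^T,\dots,\xi_m^T)^T\in\mathbb{R}^{km}$, put $\psi^{(\mathbf{u})}(x,\xi)=\big(\psi^{(u_1)}(x,\xi_1)^T,\dots,\psi^{(u_m)}(x,\xi_m)^T\big)^T$, and note that $\hat{Q}_n=(\hat{Q}_n(u_1)^T,\dots,\hat{Q}_n(u_m)^T)^T$ solves $\mathbb{P}_n\psi^{(\mathbf{u})}(\cdot,\xi)=0$, that $Q_0=(Q_0(u_1)^T,\dots,Q_0(u_m)^T)^T$ solves $P_0\psi^{(\mathbf{u})}(\cdot,\xi)=0$, and that $Q_{\mathbb{B}_n}=(Q_{\mathbb{B}_n}(u_1)^T,\dots,Q_{\mathbb{B}_n}(u_m)^T)^T$ solves $\mathbb{B}_n\psi^{(\mathbf{u})}(\cdot,\xi)=0$, this last being the bootstrap $Z$-estimator of Theorem~3.2 for the Bayesian bootstrap weights. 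Exactly as in the proof of Theorem~3.1, the bound $\Vert P-\mathbb{B}_n\Vert_{\mathrm{TV}}=o_{\mathrm{Pr}}(n^{-1/2})$ recalled in Section~2, together with the local stability of the argmin functional at a point of nonsingular Hessian, lets us replace the true Dirichlet-process posterior of $Q_P$ by $Q_{\mathbb{B}_n}$, so it suffices to establish the joint asymptotic normality of $\sqrt{n}(\hat{Q}_n-Q_0)$ and, given the data, of $\sqrt{n}(Q_{\mathbb{B}_n}-\hat{Q}_n)$.

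I would then verify the five hypotheses of Theorem~3.2 for $\psi^{(\mathbf{u})}$. Because $\psi^{(u)}(x,\xi)$ differs from the median score $\psi(x,\xi)$ only by the additive constant vector $u$, the map $\xi\mapsto P_0\psi^{(\mathbf{u})}(\cdot,\xi)$ has at $Q_0$ the block-diagonal derivative $\dot{\Psi}_0^{(\mathbf{u})}=\diag\big(\dot{\Psi}_0^{(u_1)},\dots,\dot{\Psi}_0^{(u_m)}\big)$, each block $\dot{\Psi}_0^{(u_j)}=\int\dot{\psi}_{x,0}^{(u_j)}\,dP_0$ being of the same form as the matrix $\dot{\Psi}_0$ of Theorem~3.1, namely the Hessian of $\xi\mapsto P_0\Vert X-\xi\Vert_p$ at $Q_0(u_j)$; under C1 it is well defined, and under C2 it is positive definite, hence nonsingular, by the argument of Theorem~3.1 (degeneracy occurs only when $P_0$ sits on a line, which uniqueness rules out), so Condition~1 holds and $\dot{\Psi}_0^{(\mathbf{u})}$ is nonsingular. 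Consistency of $\hat{Q}_n(u_j)$ for $Q_0(u_j)$, and of $Q_{\mathbb{B}_n}(u_j)$ conditionally (Condition~4), follows from uniqueness of the $u_j$-quantile together with the Glivenko--Cantelli-type uniform convergence of the convex objective $\xi\mapsto\mathbb{P}_n\{\Phi_p(u_j,X-\xi)-\Phi_p(u_j,X)\}$ and its bootstrap analogue, by the standard argmin-consistency argument; Condition~5 is just the fact, recorded after Theorem~3.2, that the Bayesian bootstrap weights satisfy (i)--(v) with $c=1$.

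Conditions~2 and~3 involve only the increments $\psi^{(u_j)}(\cdot,\xi_j)-\psi^{(u_j)}(\cdot,Q_0(u_j))$, which do not depend on $u_j$ and so coincide with the increments of the median score already controlled in the proof of Theorem~3.1; the relevant function class is the finite union $\bigcup_{j=1}^{m}\{\psi^{(u_j)}_l(\cdot,\xi):\Vert\xi-Q_0(u_j)\Vert\le R,\ l=1,\dots,k\}$, which lies in $\mathfrak{m}(P_0)$ and is Donsker as a finite union of Donsker classes, while each $\psi^{(u_j)}$ is bounded componentwise by $1+\Vert u_j\Vert_\infty$, making square-integrability automatic and the envelope/uniform-integrability condition inherited verbatim. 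Theorem~3.2 then gives $\sqrt{n}(\hat{Q}_n-Q_0)\rightsquigarrow\mathrm{N}_{km}\big(0,\{\dot{\Psi}_0^{(\mathbf{u})}\}^{-1}\Sigma_0^{(\mathbf{u})}\{\dot{\Psi}_0^{(\mathbf{u})}\}^{-1}\big)$ and $\sqrt{n}(Q_{\mathbb{B}_n}-\hat{Q}_n)$, given the data, converging to the same limit in $P_0$-probability, where $\Sigma_0^{(\mathbf{u})}=P_0\psi^{(\mathbf{u})}(X,Q_0)\psi^{(\mathbf{u})}(X,Q_0)^T$ has $(j,l)$ block $\Sigma_{0;u_j,u_l}$. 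Since $\dot{\Psi}_0^{(\mathbf{u})}$ is block-diagonal, the $(j,l)$ block of $\{\dot{\Psi}_0^{(\mathbf{u})}\}^{-1}\Sigma_0^{(\mathbf{u})}\{\dot{\Psi}_0^{(\mathbf{u})}\}^{-1}$ equals $\{\dot{\Psi}_0^{(u_j)}\}^{-1}\Sigma_{0;u_j,u_l}\{\dot{\Psi}_0^{(u_l)}\}^{-1}$, the asserted covariance, and reading off the blocks gives (i) and (ii). The case $k=2$ with general $1<p<\infty$ is immediate, since the integrality of $p$ enters only through the entropy bound behind Condition~2 and the nonsingularity of the blocks, both of which hold in dimension two for all $1<p<\infty$ as in Theorem~3.1.

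The step I expect to carry the weight is Conditions~2 and~3, i.e., that the base class $\{\psi_l(\cdot,\theta)\}$ is Donsker with an integrable envelope over its $o(n^{-1/2})$-shrinking neighbourhoods of $\theta_0$: controlling the oscillation of $x\mapsto\vert x_l-\theta_l\vert^{p-1}/\Vert x-\theta\Vert_p^{p-1}$ near configurations where some coordinate of $x-\theta$ vanishes is precisely where the hypothesis ``$p\ge2$ an integer, or $k=2$'' is used, and that analysis is imported from the proof of Theorem~3.1 rather than redone. Everything genuinely new in Theorem~5.1 --- the stacking into $\mathbb{R}^{km}$, the common bootstrap weights producing the off-diagonal blocks $\Sigma_{0;u_j,u_l}$, and the block-diagonal derivative --- is then routine bookkeeping once that input is granted.
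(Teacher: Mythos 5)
Your proposal is correct and follows essentially the same route as the paper's proof, which stacks the $m$ quantile score equations into a single $Z$-estimation problem in $\mathbb{R}^{km}$, applies the Wellner--Zhan bootstrap theorem (Theorem 3.2) with the common Bayesian bootstrap weights, and notes that the verification of conditions reduces to that of Theorem 3.1. The one thing worth flagging is that the paper states the reduction in a single sentence and omits the details you fill in (the block-diagonal structure of $\dot{\Psi}_0^{(\mathbf{u})}$, the cancellation of the additive $u$-terms in the increments, and the reading off of the $(j,l)$ covariance blocks), all of which you handle correctly.
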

The uniqueness of the quantiles holds unless $P_0$ is completely supported on a straight line on $\mathbb{R}^k$, (Section 3, Chaudhuri \citep{chaudhuri1996geometric}). We give the proof of the previous theorem in Section 7.
\section{Simulation study and a real data application}
\label{sec:Simulation}
\subsection{Simulation study}
In this subsection, we demonstrate frequentist coverage of Bayesian credible sets for the multivariate $\ell_1$-median. We consider sample sizes $n=100$, $1000$ and $10000$. We generate data from multivariate normal and multivariate Laplace distributions with mean vector $\mu=0_k$ and covariance matrix $\Sigma=\mathrm{I}_k$, with cases $k=2$ and $k=3$.\par
To choose the transformation matrix for the transformation-and- retransformation technique, we follow the method discussed in Chakraborty, Chaudhuri and Oja \citep{chakraborty1998operating}. Recall that $X(\alpha)$ should be chosen in such a way that $\{X(\alpha)\}^T\Sigma ^ {-1}$ $X(\alpha)$ is as close as possible to a matrix $\lambda I_k$. We use the usual sample variance-covariance matrix (say $\hat{\Sigma}$) as an estimate of $\Sigma$. We want to make the eigenvalues of $X(\alpha)^T\hat{\Sigma} ^ {-1}X(\alpha)$ as equal as possible. For this we minimize the ratio between the arithmetic mean and the geometric mean of the eigenvalues of the positive definite matrix ${X(\alpha)}^T\hat{\Sigma} ^ {-1}X(\alpha)$, as suggested by Chakraborty, Chaudhuri and Oja \citep{chakraborty1998operating}.\par
We draw $N=2000$ many samples from the posterior distributions of $\theta(P)$ and $\theta^{(\alpha)}(P)$. We adopt two procedures for constructing the credible sets. In the first procedure, we construct the credible sets using $2.5$th and the $97.5$th coordinate-wise percentiles of the posterior samples. In the second procedure, we define $\bar{\theta}=N^{-1}\sum_{i=1}^N\theta_i$ and $\bar{S}=N^{-1}\sum_{i=1}^N(\theta_i-\bar{\theta})(\theta_i-\bar{\theta})^{\prime}$, where $\theta_i$,\ $i=1,\dots,N$, are the posterior samples. Then we compute $(\theta_i-\bar{\theta})^{\prime}{\bar{S}}^{-1}(\theta_i-\bar{\theta})$, $i=1,\dots,N$, and let $r$ be the $95$th percentile of them. Then the ellipsoid $\{\theta:(\theta-\bar{\theta})^{\prime}{\bar{S}}^{-1}(\theta-\bar{\theta}) \leq r\}$ is a $95\%$ credible set. \par
In the following tables, we show the sizes and coverages of the credible sets. It can be seen that as $n$ increases, the size decreases and the coverage increases. Also it can be noted that for $k=3$, the credible hyperrectangles are smaller than that for $k=2$.
\begin{table}[htbp]
\caption {Size and Coverage of $95\%$ Bayesian credible hyperrectangles of $\ell_1$-median with $k=2$} \label{tab:title} 
\begin{center}
 \begin{tabular}{|c|c|c|c|c|c|} 
 \hline
 \multirow{5}{5em}{Non Affine Equivariant} &  \multicolumn{5}{|c|}{Diameters of Credible hyperrectangles}\\\cline{2-6} 
 & Data & & $n=100$ & $n=1000$ & $n=10000$\\\cline{2-6}
&\multirow{2}{4em}{Normal}& $p=2$ & 0.681 & 0.185 & 0.063 \\\cline{3-6} 
& & $p=3$ & 0.587 & 0.192 & 0.059 \\\cline{2-6} 
&\multirow{2}{4em}{Laplace} & $p=2$ & 0.467 & 0.123 & 0.037 \\\cline{3-6} 
& & $p=3$ & 0.401 & 0.124 & 0.041 \\
\hline \hline
\multirow{4}{5em}{Affine Equivariant} & \multirow{2}{4em}{Normal} & p = 2 & 0.758 & 0.195 & 0.055 \\\cline{3-6}
& & $p=3$ & 0.599 & 0.158 & 0.057 \\\cline{2-6} 
&\multirow{2}{4em}{Laplace} & $p = 2$ & 0.673 & 0.187 & 0.061 \\\cline{3-6}
& & $p=3$ & 0.528 & 0. 185 & 0.056\\
\hline
\multirow{5}{5em}{Non Affine Equivariant} &  \multicolumn{5}{|c|}{Frequentist Coverage}\\\cline{2-6} 
&\multirow{2}{4em}{Normal}& $p=2$ & 0.937 & 0.942 & 0.960 \\\cline{3-6} 
& & $p=3$ & 0.940 & 0.945 & 0.955 \\\cline{2-6} 
&\multirow{2}{4em}{Laplace} & $p=2$ & 0.930 & 0.945 & 0.951 \\\cline{3-6} 
& & $p=3$ & 0.932 & 0.941 & 0.956\\
\hline \hline
\multirow{4}{5em}{Affine Equivariant} & \multirow{2}{4em}{Normal} & p =   2 & 0.921 & 0.939 & 0.949 \\\cline{3-6}
& & $p=3$ & 0.920 & 0.932 & 0.940 \\\cline{2-6} 
&\multirow{2}{4em}{Laplace} & $p = 2$ & 0.930 & 0.945 & 0.951 \\\cline{3-6}
& & $p=3$ & 0.932 & 0.941 & 0.956 \\
\hline
\end{tabular}
\end{center}
\end{table}
\begin{table}[htbp]
\caption {Size and Coverage of $95\%$ Bayesian credible hyperrectangles of $\ell_1$-median with $k=3$} 
\begin{center}
 \begin{tabular}{|c|c|c|c|c|c|} 
 \hline
 \multirow{5}{5em}{Non Affine Equivariant} &  \multicolumn{5}{|c|}{Diameters of Credible hyperrectangles}\\\cline{2-6} 
 & Data & & $n=100$ & $n=1000$ & $n=10000$\\\cline{2-6}
&\multirow{2}{4em}{Normal}& $p=2$ & 0.736 & 0.218 & 0.071 \\\cline{3-6} 
& & $p=3$ & 0.677 & 0.196 & 0.088\\\cline{2-6} 
&\multirow{2}{4em}{Laplace} & $p=2$ & 0.467 & 0.141 & 0.042\\\cline{3-6} 
& & $p=3$ & 0.402 & 0.113 & 0.039\\
\hline \hline
\multirow{4}{5em}{Affine Equivariant} & \multirow{2}{4em}{Normal} & p = 2 & 0.225 & 0.065 & 0.019  \\\cline{3-6}
& & $p=3$ & 0.287 & 0.055 & 0.008\\\cline{2-6} 
&\multirow{2}{4em}{Laplace} & $p = 2$ & 0.108 & 0.028 & 0.007 \\\cline{3-6}
& & $p=3$ & 0.188 & 0.037 & 0.010\\
\hline
\multirow{5}{5em}{Non Affine Equivariant} &  \multicolumn{5}{|c|}{Frequentist Coverage}\\\cline{2-6} 
&\multirow{2}{4em}{Normal}& $p = 2$ & 0.931 & 0.948 & 0.954 \\\cline{3-6} 
& & $p=3$ & 0.932 & 0.942 & 0.954 \\\cline{2-6} 
&\multirow{2}{4em}{Laplace} & $p = 2$ & 0.929 & 0.940 & 0.955 \\\cline{3-6} 
& & $p=3$ & 0.928 & 0.933 & 0.942\\
\hline \hline
\multirow{4}{5em}{Affine Equivariant} & \multirow{2}{4em}{Normal} & p = 2 & 0.928 & 0.951 & 0.959 \\\cline{3-6}
& & $p=3$ & 0.933 & 0.942 & 0.950 \\\cline{2-6} 
&\multirow{2}{4em}{Laplace} & $p = 2$ & 0.930 & 0.945 & 0.951 \\\cline{3-6}
& & $p=3$ & 0.928 & 0.933 & 0.950 \\
\hline
\end{tabular}
\end{center}
\end{table}

\begin{table}[htbp]
\caption {Size and Coverage of $95\%$ Bayesian credible ellipsoids of $\ell_1$-median with $k=2$} \label{tab:title} 
\begin{center}
 \begin{tabular}{|c|c|c|c|c|c|} 
 \hline
 \multirow{5}{5em}{Non Affine Equivariant} &  \multicolumn{5}{|c|}{Radii of credible ellipsoids}\\\cline{2-6} 
 & Data & & $n=100$ & $n=1000$ & $n=10000$\\\cline{2-6}
&\multirow{2}{4em}{Normal}& $p=2$ & 5.941 & 5.920 & 5.901 \\\cline{3-6} 
& & $p=3$ & 6.181 & 6.005 & 5.992 \\\cline{2-6} 
&\multirow{2}{4em}{Laplace} & $p=2$ & 6.113 & 5.995 & 5.889\\\cline{3-6} 
& & $p=3$ & 6.062 & 5.997 & 5.816\\
\hline \hline
\multirow{4}{5em}{Affine Equivariant} & \multirow{2}{4em}{Normal} & p = 2 & 5.960 & 5.880 & 5.799\\\cline{3-6}
& & $p=3$ & 6.005 & 5.991 & 5.895 \\\cline{2-6} 
&\multirow{2}{4em}{Laplace} & $p = 2$ & 6.110 & 6.001 & 5.992\\\cline{3-6}
& & $p=3$ & 6.129 & 5.993 &  5.881\\
\hline
\multirow{5}{5em}{Non Affine Equivariant} &  \multicolumn{5}{|c|}{Frequentist Coverage}\\\cline{2-6} 
&\multirow{2}{4em}{Normal}& $p=2$ & 0.950 & 0.953 & 0.960 \\\cline{3-6} 
& & $p=3$ & 0.942 & 0.954 & 0.956 \\\cline{2-6} 
&\multirow{2}{4em}{Laplace} & $p=2$ & 0.949 & 0.950 & 0.956 \\\cline{3-6} 
& & $p=3$ & 0.938 & 0.946 & 0.951\\
\hline \hline
\multirow{4}{5em}{Affine Equivariant} & \multirow{2}{4em}{Normal} & p =   2 & 0.942 & 0.955 & 0.959 \\\cline{3-6}
& & $p=3$ & 0.930 & 0.946 & 0.948 \\\cline{2-6} 
&\multirow{2}{4em}{Laplace} & $p = 2$ & 0.930 & 0.948 & 0.955 \\\cline{3-6}
& & $p=3$ & 0.944 & 0.949 & 0.955 \\
\hline
\end{tabular}
\end{center}
\end{table}
\begin{table}[htbp]
\caption {Size and Coverage of $95\%$ Bayesian credible ellipsoids of $\ell_1$-median with $k=3$} \label{tab:title} 
\begin{center}
 \begin{tabular}{|c|c|c|c|c|c|} 
 \hline
 \multirow{5}{5em}{Non Affine Equivariant} &  \multicolumn{5}{|c|}{Radii of credible ellipsoids}\\\cline{2-6} 
 & Data & & $n=100$ & $n=1000$ & $n=10000$\\\cline{2-6}
&\multirow{2}{4em}{Normal}& $p=2$ & 7.723 & 7.701 & 7.671 \\\cline{3-6} 
& & $p=3$ & 7.811 & 7.768 & 7.682 \\\cline{2-6} 
&\multirow{2}{4em}{Laplace} & $p=2$ & 7.868 & 7.716 & 7.279\\\cline{3-6} 
& & $p=3$ & 8.017 & 7.992 & 7.810\\
\hline \hline
\multirow{4}{5em}{Affine Equivariant} & \multirow{2}{4em}{Normal} & p = 2 & 8.005 & 7.997 & 7.814\\\cline{3-6}
& & $p=3$ & 8.112 & 8.001 & 7.902 \\\cline{2-6} 
&\multirow{2}{4em}{Laplace} & $p = 2$ & 8.103 & 7.995 & 7.877\\\cline{3-6}
& & $p=3$ & 8.106 & 7.968 &  7.852\\
\hline
\multirow{5}{5em}{Non Affine Equivariant} &  \multicolumn{5}{|c|}{Frequentist Coverage}\\\cline{2-6} 
&\multirow{2}{4em}{Normal}& $p = 2$ & 0.933 & 0.953 & 0.958 \\\cline{3-6} 
& & $p=3$ & 0.935 & 0.941 & 0.955 \\\cline{2-6} 
&\multirow{2}{4em}{Laplace} & $p = 2$ & 0.939 & 0.943 & 0.955 \\\cline{3-6} 
& & $p=3$ & 0.948 & 0.953 & 0.956\\
\hline \hline
\multirow{4}{5em}{Affine Equivariant} & \multirow{2}{4em}{Normal} & p = 2 & 0.934 & 0.950 & 0.949 \\\cline{3-6}
& & $p=3$ & 0.933 & 0.942 & 0.950 \\\cline{2-6} 
&\multirow{2}{4em}{Laplace} & $p = 2$ & 0.935 & 0.946 & 0.953 \\\cline{3-6}
& & $p=3$ & 0.938 & 0.943 & 0.952 \\
\hline
\end{tabular}
\end{center}
\end{table}
\newpage
\subsection{Analysis of Fisher's iris data}
Fisher's iris data consists of three plant species, namely, Setosa, Virginica and Versicolor and four features, namely, sepal length, sepal width, petal length and petal width measured for each sample. We compute the 95\% credible ellipsoid of the 4-dimensional multivariate $\ell_1$-median with $p=2$ and report its four principal axes in Table 6.5. 
\begin{table}[h]
\caption{Principal axes of 95\% credible ellipsoid of spatial median}
\begin{center}
\begin{tabular}{|c|c|c|c|}
\hline
1st axis & 2nd axis & 3rd axis & 4th axis\\
\hline
0.0580 & $\minus 0.3129$ & $\minus 0.6747$ & $\minus 0.6629$\\
\hline
$\minus 0.1461$ & 0.2193 & $\minus 0.6143$ & $\minus 0.7437$\\
\hline
$\minus 0.2965$ & 0.8626 & 0.4089 & $\minus 0.0252$\\
\hline
0.9420 & 0.3252 & $\minus 0.0081$ & $\minus 0.0824$\\
\hline
\end{tabular}
\end{center}
\end{table}
Also in Figures 1, 2 and 3, we plot 6 pairs of features for each species and the credible ellipsoids for the corresponding two dimensional $\ell_1$-medians with $p=2$.
\begin{figure}[h]
\centering
\caption{$95\%$ Credible ellipsoids for the species Setosa}
\includegraphics[trim={0 5cm 0 6.8cm},width=0.95\textwidth,clip]{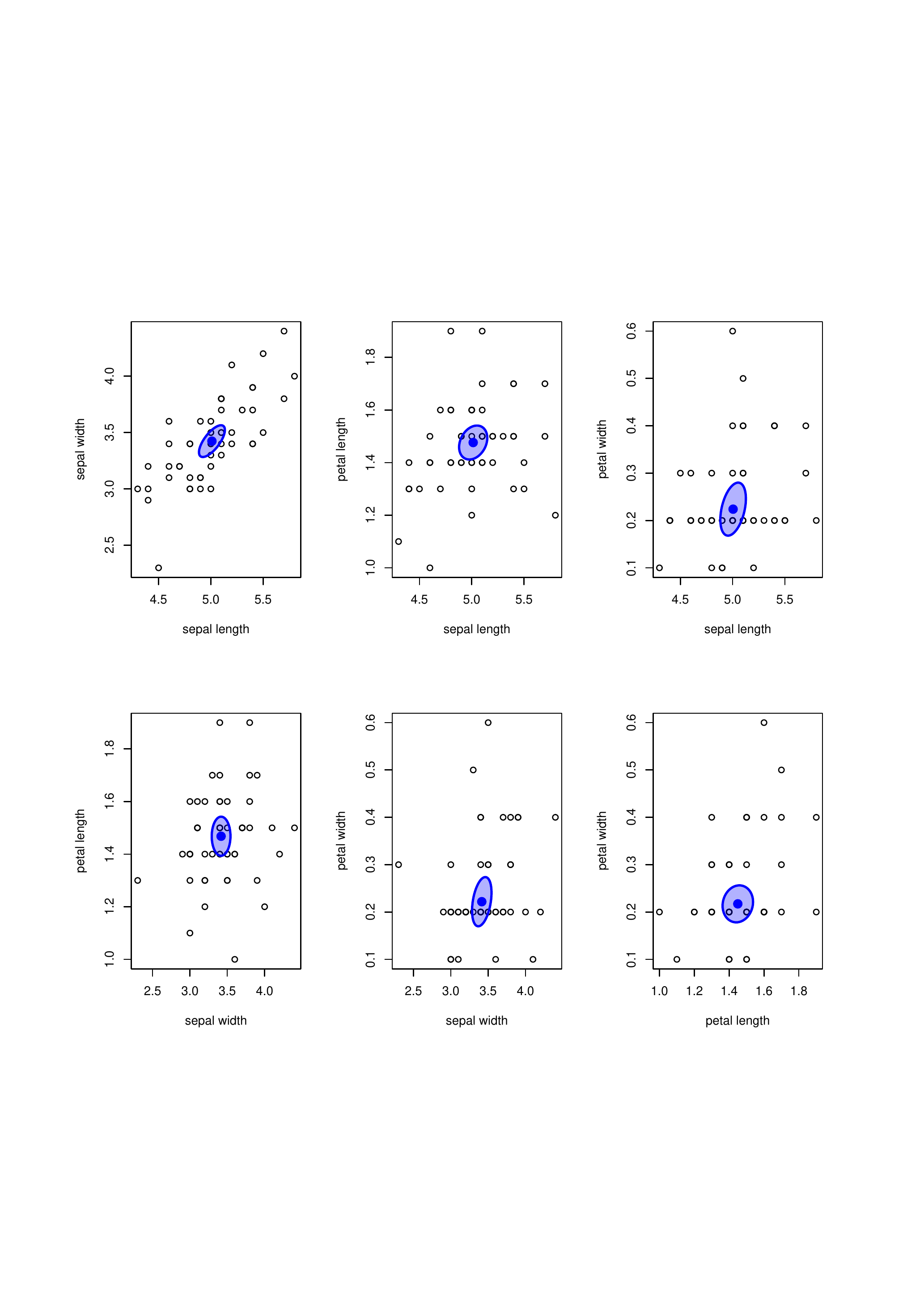}
\end{figure}
\begin{figure}[h]
\centering
\caption{$95\%$ Credible ellipsoids for the species Virginica}
\includegraphics[trim={0 5cm 0 6.8cm},width=0.95\textwidth,clip]{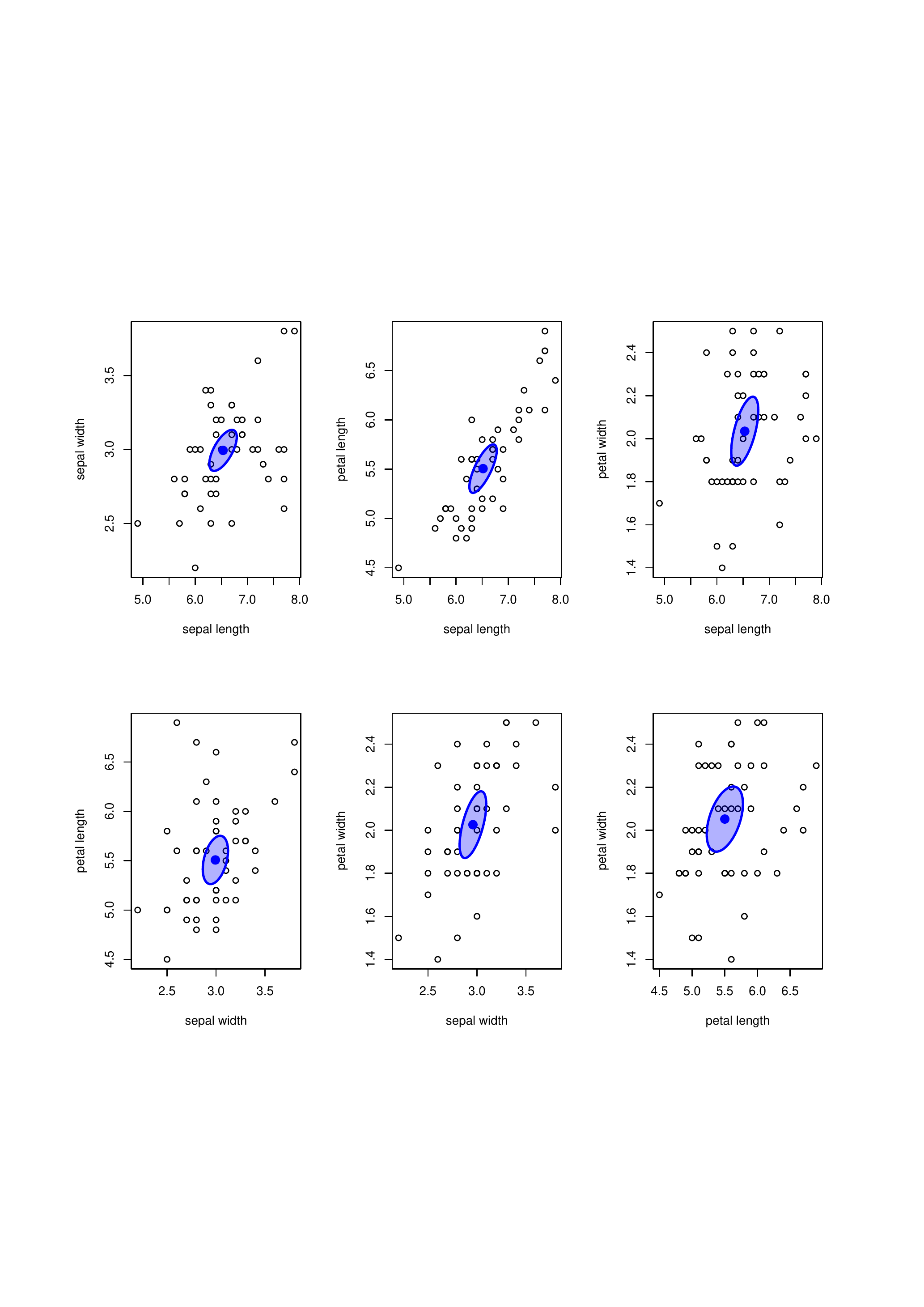}
\end{figure}
\begin{figure}[h]
\centering
\caption{$95\%$ Credible ellipsoids for the species Versicolor}
\includegraphics[trim={0 5cm 0 6.8cm},width=0.95\textwidth,clip]{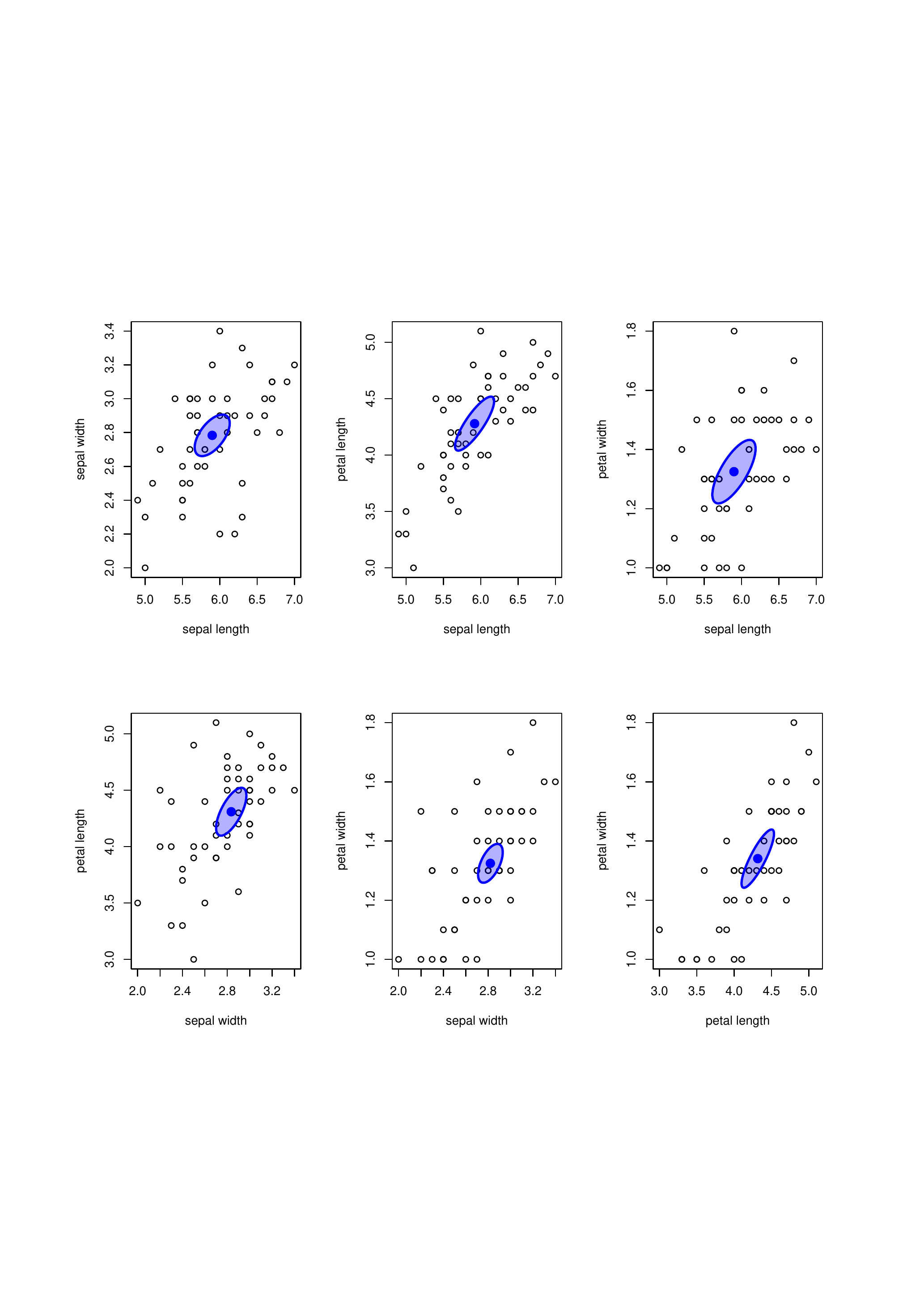}
\end{figure}
\section{Proof}
\label{sec: Proof}
\subsection{Technical preliminaries}
Before we proceed to the proof, we introduce some notations and definitions that we will need in the proof. For the rest of the paper, $L_r(Q)$ denotes the norm $\Vert f \Vert_{Q,r}=\left(\int \vert f \vert ^rdQ\right)^{1/r}$.
\begin{definition}[Covering Numbers and Uniform Entropy]
The covering number $N(\epsilon,\mathcal{F},\Vert \cdot \Vert)$ is the minimal number of balls $\{g:\Vert g-f \Vert < \epsilon\}$ of radius $\epsilon$ needed to cover $\mathcal{F}$. \par 
A class of functions $\mathcal{F}$ with the envelope function $F$ is said to satisfy the uniform entropy condition if
\begin{equation}
\int_0^{\infty}\sup_{Q}\sqrt{\log N(\epsilon \Vert F \Vert_{Q,2},\mathcal{F},L_2(Q))}\diff\epsilon < \infty,
\end{equation}
where the supremum has been taken over all finite discrete probability measures on with $\Vert F \Vert^2_{Q,2}= \int F^2\mathrm{d}Q>0$.
\end{definition}
\begin{definition}[Bracketing Numbers]
For two functions $l$ and $u$, the bracket $[l,u]$ is defined to be the set of all functions $f$ with $l \leq f \leq u$. An $\epsilon$-bracket in $L_r(P)$ is a bracket $[l,u]$ with $\Vert u-l \Vert \leq \epsilon$. \par 
The bracketing number $N_{[\,]}(\epsilon,\mathcal{F},L_r(P))$ is the minimum number of $\epsilon$-brackets needed to cover $\mathcal{F}$.
\end{definition}
\begin{definition}[VC Class of Sets]
Let $\mathcal{C}$ be a collection of subsets of a set $\mathfrak{X}$. We say that an arbitrary subset $S=\{x_1,x_2,\dots,x_n\}$ of $\mathcal{X}$ is shattered by $\mathcal{C}$ if for every subset $S^{\prime}\subseteq S$, there exists $C \in \mathcal{C}$ such that $S^{\prime}=S \cap C$.\par 
The VC-index of the class $\mathcal{C}$ is the smallest $n$ for which no set of size $n$ is shattered by $\mathcal{C}$ i.e. 
\begin{equation}
V(\mathcal{C}) = \inf\{n:\max_{x_1,\dots,x_n} \Delta_n(\mathcal{C},x_1,\dots,x_n) < 2^n\},
\end{equation}
where $\Delta_n(\mathcal{C},x_1,\dots,x_n) = \#\{C \cap \{x_1,\dots,x_n\}: C \in \mathcal{C}\}$. A collection of measurable sets is called a VC class of sets if its VC-index is finite.
\end{definition}
\begin{definition}[Glivenko-Cantelli Class]
A function class $\mathcal{F}$ for which $$\Vert \mathbb{P}_n-P\Vert_{\mathcal{F}}=\sup_{f \in \mathcal{F}}\vert \mathbb{P}_nf-Pf \vert \rightarrow 0,$$ is called a $P$-Glivenko-Cantelli class, where the convergence can be in probability or almost surely.
\end{definition}
\begin{definition}[Donsker Class]
For a function class $\mathcal{F}$ and a probability measure $P$, suppose that 
\begin{equation}
\sup_{f \in \mathcal{F}} \vert f(x)-Pf \vert < \infty.
\end{equation}
Let $\mathcal{L}^{\infty}(T)$ be the set of all functions $f:T \mapsto R$ such that $$\sup_{t \in T}\vert f(t) \vert < \infty.$$ By viewing the empirical process $\{\mathbb{G}_nf: f \in \mathcal{F}\}$ as a map into $\mathcal{L}^{\infty}(\mathcal{F})$, if
\begin{equation}
\mathbb{G}_n = \sqrt{n}(\mathbb{P}_n-P) \rightsquigarrow \mathbb{G}, \qquad \mathrm{in} \ \mathcal{L}^{\infty}(\mathcal{F}),
\end{equation}
for a tight Borel measurable element $\mathbb{G}$ $\mathrm{in}$ $\mathcal{L}^{\infty}(\mathcal{F})$, then we say that $\mathcal{F}$ is called a $P$-Donsker class.
\end{definition}
\subsection{Proof of Theorem 3.1}
We give the proof in two steps. In the first step, we verify the conditions of Theorem 3.2 in our situation and show that the asymptotic conditional distribution of $\sqrt{n}(\theta(\mathbb{B}_n)-\hat{\theta}_n)$ is $\mathrm{N}_k(0,\dot{\Psi}_0^{-1}\Sigma_0\dot{\Psi}_0^{-1})$. In the second step (Lemma 7.1), we show that the asymptotic posterior distribution of $\sqrt{n}(\theta(P)-\hat{\theta}_n)$ is the same as the asymptotic conditional distribution of $\sqrt{n}(\theta(\mathbb{B}_n)-\hat{\theta}_n)$.\par
We show that $\psi(\cdot,\theta)$ defined in (2.2) satisfies the conditions in Theorem 3.2. Firstly, we need to show that the function class $\mathcal{F}_R \in \mathfrak{m}(P_0)$ where $\mathcal{F}_R$ is defined in (3.5). To achieve this, we prove that the empirical process $\mathbb{G}_n=\sqrt{n}(\mathbb{P}_n-P_0)$ indexed by $\mathcal{F}_R$ is stochastically separable. It can be noted that $\psi_j(x,\theta),\ j=1,\dots,k$, are left-continuous at each $x$ for every $\theta$ such that $\Vert \theta-\theta_0 \Vert \leq R$. Hence there exists a null set $N$ and a countable $\mathcal{G} \subset \mathcal{F}_R$ such that, for every $\omega \notin N$ and $f \in \mathcal{F}_R$, we have a sequence $g_m \in \mathcal{G}$ with $g_m \rightarrow f$ and $\mathbb{G}_n(g_m,\omega) \rightarrow \mathbb{G}_n(f,\omega)$. For more details, see Chapter 2.3, van der Vaart and Wellner \citep{van1996weak}.
\begin{proof}[Verification of Condition \textup{1} in Theorem \textup{3.2}]
By Condition C2 in Theorem 3.1, the $\ell_1$-median of $P_0$ exists and is unique. Hence there exists a $\theta_0 \equiv \theta(P_0) \in \mathbb{R}^k$ such that (3.6) is satisfied. Also, $\Psi_0(\theta)=P_0\psi(X,\theta)$ is differentiable from Condition C1. This follows from the fact that for a fixed $\theta \in \mathbb{R}^k$ and a density $f$ bounded on compact subsets of $\mathbb{R}^k$, $P_0(\Vert X-\theta \Vert ^{-1})$ is finite. This can be verified by using $k$-dimensional polar transformation for which the determinant of the Jacobian matrix contains $(k-1)$th power of the radius vector (Chaudhuri \citep{chaudhuri1996geometric}).
\end{proof}
\begin{proof}[Verification of Condition \textup{2} in Theorem \textup{3.2}]
From Wellner and Zhan \citep{wellner1996bootstrapping}, Condition 2 is satisfied if, $\mathcal{F}_R$ in (3.5) is $P_0$-Donsker for some $R>0$ and
\begin{equation}
\max_{1 \leq j \leq k} P_0(\psi_j(\cdot,\theta)-\psi_j(\cdot,\theta_0))^2 \rightarrow 0,
\end{equation}
as $\theta \rightarrow \theta_0$.
In order to prove that $\mathcal{F}_R$ is $P_0$-Donsker, we define the following two function classes:
\begin{align}
\mathcal{F}_{1R} &= \Big \{\frac{\vert x_j-\theta_j \vert ^{p-1} }{\Vert x-\theta \Vert_p^{p-1}}: j=1,2,\dots,k,\Vert \theta - \theta_0 \Vert \leq R \Big \}, \\
\mathcal{F}_{2R} &= \Big \{\sign(\theta_j-x_j): j=1,2,\dots,k,\Vert \theta - \theta_0 \Vert \leq R \Big \}.
\end{align}
From Example 2.10.23 of van der Vaart and Wellner \citep{van1996weak}, if $\mathcal{F}_{1R}$ and $\mathcal{F}_{2R}$ satisfy the uniform entropy condition and are suitably measurable, then $\mathcal{F}_R= \mathcal{F}_{1R}\mathcal{F}_{2R}$ is $P_0$-Donsker provided their envelopes $F_{1R}$ and $F_{2R}$ satisfy $P_0F_{1R}^2F_{2R}^2 < \infty$.\par 
Every $f \in \mathcal{F}_{1R}$ is continuous at each $x$, hence $\mathcal{F}_{1R}$ has a countable subset $\mathcal{G}$ such that for every $f \in \mathcal{F}_{1R}$ there exists a sequence $g_m(x) \rightarrow f(x)$ for every $x$. Then by Example 2.3.4. of van der Vaart and Wellner \citep{van1996weak}, $\mathcal{F}_{1R}$ is $P$-measurable for every $P$. Since every $f \in \mathcal{F}_{2R}$ is left-continuous at each $x$, same conclusion holds for $\mathcal{F}_{2R}$ as well.\par
A class of functions $\mathcal{F}$ is called a VC-major class of functions if the sets $\{x: f(x)>t\}$ with $f$ ranging over $\mathcal{F}$ and $t$ over $\mathbb{R}$ form a VC-class of sets. By Corollary 2.6.12 of van der Vaart and Wellner \citep{van1996weak}, if $\mathcal{F}_{1R}$ is a bounded VC-major class of functions, then it satisfies the uniform entropy condition. It is easy to see that $\mathcal{F}_{1R}$ is bounded.
We now show that $\mathcal{F}_{1R}$ is a VC-major class of sets, that is, the sets $\{x:f(x)>t\}$ with $f$ varying over $\mathcal{F}_{1R}$ and $t$ over $\mathbb{R}$ form a VC class of sets. Define the collection of sets
$\mathcal{S}=\{S_{\theta,t}: \Vert \theta-\theta_0 \Vert \leq R,\ t \in \mathbb{R}\}$,
where $S_{\theta,t}$ is defined as
\begin{equation}
S_{\theta,t}=\Big \{x:\frac{\vert x_j-\theta_j \vert ^{p-1} }{\Vert x-\theta \Vert_p^{p-1}} > t,\ j=1,2,\dots,k\Big \}.
\end{equation}
We need to show $\mathcal{S}$ is a VC-class of sets. Note that $S_{\theta,t}=\cap_{j=1}^k S_{\theta,t}^j$, where $S_{\theta,t}^j$ is defined as
\begin{equation}
S_{\theta,t}^j = \Big \{x:\frac{\vert x_j-\theta_j \vert ^{p-1} }{\Vert x-\theta \Vert_p^{p-1}} > t\Big \}.
\end{equation}
In view of Lemma 2.6.17 of van der Vaart and Wellner \citep{van1996weak}, it is enough to show
\begin{equation}
\mathcal{S}^j=\{S_{\theta,t}^j: \Vert \theta - \theta_0 \Vert \leq R,\ t \in \mathbb{R}\}
\end{equation}
is a VC-class for every $j$; because if every $\mathcal{S}^j$ is a VC-class of sets, $\mathcal{S}=\sqcap_{j=1}^k \mathcal{S}^j=\{\cap_{j=1}^kS^j:S^j \in \mathcal{S}^j\}$ is also a VC-class of sets. Hence we only show that $\mathcal{S}^1=\{S_{\theta,t}^1: \Vert \theta - \theta_0 \Vert \leq R,\ t \in \mathbb{R}\}$ is a VC-class of sets. We can write $S_{\theta,t}^1$ as
\begin{align}
S_{\theta,t}^1 
&= \Big \{x:\vert x_1-\theta_1 \vert^p > t^{p/(p-1)}\sum_{j=1}^k \vert x_j-\theta_j \vert^p)\Big \}\nonumber \\
&= \Big \{x:\vert x_1-\theta_1 \vert^p >\frac{t^{p/(p-1)}}{1-t^{p/(p-1)}}\sum_{j=2}^k \vert x_j-\theta_j \vert^p)\Big \}. \nonumber
\end{align}
Define $R_{\theta,c}^1 = \Big \{x:\vert x_1-\theta_1 \vert^p >c\sum_{j=2}^k \vert x_j-\theta_j \vert^p\Big \}$ and $\mathcal{R}^1 = \{R_{\theta,c}^1: \theta \in \mathbb{R}^k, c \in \mathbb{R}\}$.
It is enough to show that $\mathcal{R}^1$ is a VC-class, since $\mathcal{R}^1$ contains $\mathcal{S}^1$. Define
\begin{align*}
A_{0;\theta,c} =& \{x:(x_1-\theta_1)^p>c\sum_{j=2}^k(x_j-\theta_j)^p\}\bigcap_{j-1}^k\{x:\ x_j-\theta_j \geq 0\},\\
A_{0;\theta,c}^{\prime} =& \{x:(\theta_1-x_1)^p>c\sum_{j=2}^k(x_j-\theta_j)^p,\ \theta_1-x_1 \leq 0\}\bigcap_{j=2}^k\{ x:x_j-\theta_j \geq 0\}.
\end{align*}
For $i,\ j=2,\dots,k,\ i \neq j$, we define
\begin{align*}
A_{i;\theta,c} =&\{x:(x_1-\theta_1)^p>c(\theta_i-x_i)^p + c\sum_{\substack {j=2 \\ j \neq i}}^k (x_j-\theta_j)^p,\\ & x_1-\theta_1 \geq 0,\ \theta_i-x_i \geq 0,\ x_j-\theta_j \geq 0,\ j=2,\dots,k,\ j \neq i\},\\
A_{i;\theta,c}^{\prime} =&\{x:(\theta_1-x_1)^p>c(\theta_i-x_i)^p + c\sum_{\substack {j=2 \\ j \neq i}}^k (x_j-\theta_j)^p,\\ & \theta_1-x_1 \geq 0,\ \theta_i-x_i \geq 0,\ x_j-\theta_j \geq 0,\ j=2,\dots,k,\ j \neq i\},\\
A_{ij;\theta,c}=& \{x:(x_1-\theta_1)^p>c(\theta_i-x_i)^p + c(\theta_j-x_j)^p + c\sum_{\substack {l=2 \\ l \neq i,j}}^k (x_l-\theta_l)^p,\\ & x_1-\theta_1 \geq 0,\ \theta_i-x_i \geq 0,\ \theta_j-x_j \geq 0,\ x_l-\theta_l \geq 0,\ l=2,\dots,k,\\&\ l \neq i,j\},\\
A_{ij;\theta,c}^{\prime}=& \{x:(\theta_1-x_1)^p>c(\theta_i-x_i)^p + c(\theta_j-x_j)^p + c\sum_{\substack {l=2 \\ l \neq i,j}}^k (x_l-\theta_l)^p,\\ & \theta_1-x_1 \geq 0,\ \theta_i-x_i \geq 0,\ \theta_j-x_j \geq 0,\ x_l-\theta_l \geq 0,\ l=2,\dots,k,\\&\ l \neq i, j\}.
\end{align*}
Continuing this pattern, finally
\begin{align*}
A_{n-1;\theta,c}=& \{x:(x_1-\theta_1)^p> c\sum_{j=2}^k(\theta_j-x_j)^p,\ x_1-\theta_1 \geq 0,\ \theta_j-x_j \geq 0,\\& j=2,\dots,k\},\\
A_{n-1;\theta,c}^{\prime}=& \{x:(\theta_1-x_1)^p> c\sum_{j=2}^k(\theta_j-x_j)^p,\ \theta_1-x_1 \geq 0,\ \theta_j-x_j \geq 0,\\& j=2,\dots,k\}.
\end{align*}
Using the preceding notations, $R_{\theta,c}^1$ can be written as
\begin{align}
R_{\theta,c}^1 =& A_{0;\theta,c}\cup A^{\prime}_{0;\theta,c}\cup\{\bigcup_{l=1}^{n-2}B_{l;\theta,c}\}\cup \{\bigcup_{l=1}^{n-2}B^{\prime}_{l;\theta,c}\}\cup A_{n-1;\theta,c}\cup A_{n-1;\theta,c}^{\prime},
\end{align}
where
\begin{align*}
B_{1;\theta,c}=\bigcup_{i=2}^k A_{i;\theta,c};\ B_{1;\theta,c}^{\prime}=\bigcup_{i=2}^k A^{\prime}_{i;\theta,c},\ B_{2;\theta,c}=\mathop{\bigcup_{i=2}^k\bigcup_{j=2}^k}_{i<j}A_{ij;\theta,c},
\end{align*}
and so on. Since all the sets on the right hand side of (7.11) are in the same form, if $\mathcal{C}=\{A_{0;\theta,c}:\theta \in \mathbb{R}^k,c \in \mathbb{R}\}$ forms a VC-class of sets, then $\mathcal{R}^1$ also forms a VC-class of sets. This follows from Lemma 2.6.17 of van der Vaart and Wellner \citep{van1996weak}, which says that if $\mathcal{F}$ and $\mathcal{G}$ are VC-classes, then $\mathcal{F}  \sqcup \mathcal{G}=\{F \cup G: F \in \mathcal{F},\ G \in \mathcal{G}\}$ also forms a VC-class. We can write $A_{0;\theta,c}$ as
\begin{equation}
A_{0;\theta,c} = \{x:(x_1-\theta_1)^p>c\sum_{j=2}^k(x_j-\theta_j)^p\}
\bigcap_{j=1}^k\{x:x_j-\theta_j \geq 0\}.
\end{equation}
Since $p$ is a positive integer greater than $1$, we can write
\begin{eqnarray*}
\lefteqn{\{x:(x_1-\theta_1)^p>c\sum_{j=2}^k(x_j-\theta_j)^p\}} \\
&=\{x: \sum_{r=0}^p {p \choose r}x_1^{p-r}(-1)^r\theta_1^r > c\sum_{j=2}^k\sum_{r=0}^p {p \choose r}x_j^{p-r}(-1)^r\theta_j^r\}\\
&=\{x:x_1^p-px_1^{p-1}\theta_1+ \dots + (-1)^p\theta_1^p-c\sum_{j=2}^k(x_j^p-px_j^{p-1}\theta_j\\
 &+ \dots + (-1)^p\theta_j^p) >0\}.
\end{eqnarray*}
Consider the map $x \mapsto \phi(x)$, where
\begin{equation}
\phi(x)=\big\{x_1^{p-1},x_1^{p-2},\dots,x_1,\sum_{j=2}^kx_j^p,\sum_{j=2}^kx_j^{p-1},\dots,\sum_{j=2}^kx_j,1\big\}.
\end{equation}
Note that the class of functions $\{g_a(x)=a^T\phi(x)+x_1^p: a \in \mathbb{R}^{2p}\}$
is a finite dimensional vector space. The collection of sets $$\{x:(x_1-\theta_1)^p>c\sum_{j=2}^k(x_j-\theta_j)^p,\ \theta \in \mathbb{R}^k,\ c \in \mathbb{R}\}$$ is the same as $\mathcal{C}_1=\{x:g_a(x)>0,\ a \in \mathbb{R}^{2p}\}$ and $\mathcal{C}_1$ is a VC-class of sets by Lemma 2.6.15 of van der Vaart and Wellner \citep{van1996weak}. Each of the classes $\{x:x_j-\theta_j \geq 0\}$ for $j=1,2,\dots,k$, is a sub-collection of VC classes of sets $\mathcal{C}_2=\{x:a^Tx+b \geq 0,\ a \in \mathbb{R}^k$,\ $b \in \mathbb{R}\}$. Hence by Lemma 2.6.15 of van der Vaart and Wellner \citep{van1996weak}, $\mathcal{C}$ forms a VC-class of sets. \par
Thus we proved that $\mathcal{F}_{1R}$ is a bounded VC major class of functions. Hence $\mathcal{F}_{1R}$ satisfies the uniform entropy condition. \par
For $\mathcal{F}_{2R}$, we see that the class of functions $\{x \mapsto \sign(\theta_j-x_j):\Vert\theta-\theta_0\Vert \leq R\}$ belongs to a finite dimensional vector space. Hence from Lemma 2.6.15 of van der Vaart and Wellner \citep{van1996weak}, $\mathcal{F}_{2R}$ is a bounded VC major class of functions and satisfies the uniform entropy condition. Therefore $\mathcal{F}_R=\mathcal{F}_{1R}\mathcal{F}_{2R}$ is $P_0$-Donsker.\par
Next we need to prove (7.5), that is, $\max_{1 \leq j \leq k} P_0(\psi_j(\cdot,\theta)-\psi_j(\cdot,\theta_0))^2 \rightarrow 0$ as $\theta \rightarrow \theta_0$. Note that $\psi_j(x,\theta) \rightarrow \psi_j(x,\theta_0)$ for every $x$ as $\theta \rightarrow \theta_0$ for $j \in \{1,\dots,k\}$. Also $(\psi_j(x,\theta)-\psi_j(x,\theta_0))^2 \leq 4$ for every $x$ and every $\theta$. Hence by the dominated convergence theorem, $P_0(\psi_j(\cdot,\theta)-\psi_j(\cdot,\theta_0))^2 \rightarrow 0$ as $\theta \rightarrow \theta_0$ for $j \in \{1,\dots,k\}$. Thus (7.5) is established.
\end{proof}
\begin{proof}[Verification of Condition \textup{3} in Theorem \textup{3.2}] 
For every $j \in \{1,2,\dots,k\}$ and $\theta \in \mathbb{R}^k$, $\psi_j(x,\theta)$ is bounded by $1$ and hence is square-integrable. The $(i,j)$th element of $\Sigma_0=P_0\psi(x,\theta_0)\psi^T(x,\theta_0)$ is given by
\begin{align}
\sigma_{ij}&=\int \frac{\vert x_i-\theta_{0i} \vert^{p-1}\vert x_j-\theta_{0j} \vert^{p-1}}{\Vert x -\theta_0 \Vert_p^{2(p-1)}}\sign(\theta_{0i}-x_i)\sign(\theta_{0j}-x_j)\diff P_0 \\
&\leq \int1 \diff P_0 <\infty. \nonumber
\end{align}
The class of functions $\{\psi_j(x,\theta):j=1,2,\dots,k,\Vert \theta-\theta_0 \Vert \leq R\}$ has a constant envelope $1$. Hence $D_n(x)$ defined in (3.8) is equal to $2$ and it satisfies (3.9).
\end{proof}
\begin{proof}[Verification of Condition $4$]
First we prove $\Vert \hat{\theta}_n -\theta_0 \Vert \overset{P_0}{\to} 0$. Note that $\hat{\theta}_n$ can be written as
\begin{equation}
\hat{\theta}_n= \argmax_{\theta}\mathbb{P}_nm_\theta,
\end{equation}
where $m_\theta(x)= -\Vert x-\theta \Vert_p+\Vert x \Vert_p$. Naturally the population analog of $\hat{\theta}_n$ is given by
\begin{equation}
\theta(P)= \argmax_{\theta}\mathbb{P}m_\theta.
\end{equation}
From Corollary 3.2.3 of van der Vaart and Wellner \citep{van1996weak}, we need to establish two conditions as follows:
\begin{enumerate}[label=(\alph*)]
\item $\sup_{\theta} \vert \mathbb{P}_nm_{\theta}-P_0m_{\theta} \vert \rightarrow 0$ in probability;
\item there exists a $\theta_0$ such that $P_0m_{\theta_0}>\sup_{\theta \notin G}P_0{m_\theta}$ for every open set $G$ containing $\theta_0$.
\end{enumerate}
The first condition can be proved by showing that the class of functions $\{m_{\theta}:\theta \in \mathbb{R}^k\}$ forms a $P_0$-Glivenko-Cantelli class.
From Theorem 19.4 of van der Vaart \citep{van2000asymptotic}, the class $\mathcal{M}=\{m_{\theta}:\theta \in \Theta \subset \mathbb{R}^k\}$ will be $P_0$-Glivenko-Cantelli if $N_{[\,]}(\epsilon,\mathcal{M},L_1(P_0)) < \infty$ for every $\epsilon>0$.\par
By Example 19.7 of van der Vaart \citep{van2000asymptotic}, for a class of measurable functions $\mathcal{F}=\{f_{\theta}:\theta \in \Theta \subset \mathbb{R}^k\}$, if there exists a measurable function $m$ such that
\begin{equation}
\vert f_1(x)-f_2(x) \vert \leq m(x)\Vert \theta_1-\theta_2 \Vert,
\end{equation}
for every $\theta_1,\theta_2$ and $P_0\vert m \vert ^r < \infty$, then there exists a constant $K$, depending on $\Theta$ and $k$ only, such that the bracketing numbers satisfy
\begin{equation}
N_{[\,]}(\epsilon \Vert m \Vert_{P_0,r},\mathcal{F},\mathcal{L}_r(P_0)) \leq K\left(\frac{\mathrm{diam}\ \Theta}{\epsilon}\right)^k,
\end{equation}
for every $0<\epsilon<\mathrm{diam}\ \Theta$.\par
To use Example 19.7 of van der Vaart \citep{van2000asymptotic}, we need to restrict the parameter space to a compact subset of $\mathbb{R}^k$. We show that this can be avoided in our case by asserting that the parameter space can be restricted to a sufficiently large compact set with high probability. We claim that if for some $0<\epsilon<1/4$ and $K>0$, $P_0$ on $(\mathbb{R}^k,\mathscr{R}^k)$ satisfying $P_0(\Vert X \Vert_p \leq K)>1-\epsilon$ for any probability measure $P_0$, then $\Vert \theta(P_0) \Vert_p \leq 3K$; here $\mathscr{R}^k$ denotes the Borel sigma field on $\mathbb{R}^k$. Define $M(P_0,\theta)=P_0m_\theta=P_0(\Vert X-\theta \Vert_p-\Vert X \Vert_p)$. We show that for $0<\epsilon<1/4$, there exists $K>0$ such that $\Vert \theta \Vert_p \geq 3K$ implies $M(P_0,\theta)>0$. If $\Vert X \Vert_p \leq K$ and $\Vert \theta \Vert_p \geq 3K$, then
\begin{equation*}
\Vert X-\theta \Vert_p \geq \Vert \theta \Vert_p - \Vert X \Vert_p \geq\frac{2\Vert \theta \Vert_p}{3}+K-\Vert X \Vert_p \geq \frac{2\Vert \theta \Vert_p}{3},
\end{equation*}
Hence as $\Vert X \Vert_p \leq K \leq \Vert \theta \Vert_p/3$,
\begin{equation*}
\Vert X-\theta \Vert_p-\Vert X \Vert_p \geq \frac{2\Vert \theta \Vert_p}{3}-\frac{\Vert \theta \Vert_p}{3}=\frac{\Vert \theta \Vert_p}{3}.
\end{equation*}
Now since always $\big\vert \Vert X -\theta \Vert_p - \Vert X \Vert_p \big\vert \leq \Vert \theta \Vert_p$, we can write
\begin{align*}
M(P_0,\theta) &= \int_{\Vert X \Vert_p \leq K}(\Vert X-\theta \Vert_p-\Vert X \Vert_p)\mathrm{d}P_0+\int_{\Vert X \Vert_p>K}(\Vert X-\theta \Vert_p-\Vert X \Vert_p)\mathrm{d}P_0\\
&\geq \Vert \theta \Vert_p(\frac{1}{3}P_0(\Vert X \Vert_p \leq K)-P_0(\Vert X \Vert_p>K )\big)\\
&=\Vert \theta \Vert_p(\frac{1}{3}-\frac{4}{3}P_0(\Vert X \Vert_p>K))\\
&\geq \Vert \theta \Vert_p(\frac{1}{3}-\frac{4}{3}\epsilon)>0.
\end{align*}
We assume that $0<\epsilon<1/4$ and $K>0$ have been chosen so that $P_0$ satisfies $P_0(\Vert X \Vert_p \leq K)>1-\epsilon$. Hence $\Vert \theta(P_0)\Vert_p \leq 3K$ with high probability. Also, since $\mathbb{P}_n \rightsquigarrow P_0$, for some $0<\epsilon<1/4$ and $K>0$, $\mathbb{P}_n$ satisfies $\mathbb{P}_n(\Vert X \Vert_p \leq K)>1-\epsilon$ with high probability. Hence $\Vert \hat{\theta}_n \Vert_p \leq 3K$ with high probability. Similarly, we know that $\mathbb{B}_n -\mathbb{P}_n \overset{P_0^{\infty}\times \mathbb{B}_n}\to 0$ in the weak topology. Hence with high joint probability, $\mathbb{B}_n$ satisfies $\mathbb{B}_n(\Vert X \Vert_p \leq K)>1-\epsilon$, leading to $\Vert \theta(\mathbb{B}_n) \Vert_p \leq 3K$ with high joint probability.
\par
Using Minkowski's inequality, we write
\begin{align*}
\vert m_\theta (x)-m_{\theta^{\prime}}(x)\vert =& \vert\Vert x-\theta^{\prime} \Vert_p-\Vert x-\theta \Vert_p \vert\\
\leq & \Vert \theta-\theta^{\prime}\Vert_p. 
\end{align*}
This expression is bounded by $\Vert \theta - \theta^{\prime} \Vert$ for $p \geq 2$, by the fact that $\Vert z \Vert_{p+a} \leq \Vert z \Vert_p$ for any vector $z$ and real numbers $a \geq 0$ and $p \geq 1$. Hence we choose $m(x)=1$ for every $x$ and therefore $P_0\vert m \vert =1$. This ensures that $N_{[\,]}(\epsilon,\mathcal{M},\mathcal{L}_1(P_0))<\infty$ and hence Condition (a) is satisfied. From Condition (C2) in Theorem 3.1, Condition (b) holds. Therefore $\hat{\theta}_n \rightarrow \theta_0$ in $P_0$-probability. \par
Now to prove the consistency of $\theta(\mathbb{B}_n)$, which is viewed as a \enquote{bootstrap estimator}, we use Corollary 3.2.3 in van der Vaart and Wellner \citep{van1996weak}. Two conditions are needed for proving this. The first condition is $\sup_{\theta}\vert \mathbb{B}_nm_{\theta}-P_0m_{\theta}\vert \overset{P_0 \times \mathbb{B}_n}{\to} 0$. We verify this condition using the multiplier Glivenko-Cantelli theorem which is given in Corollary 3.6.16 of van der Vaart and Wellner \citep{van1996weak}. By the representation $\mathbb{B}_n=\sum_{i=1}^n B_{ni}\delta_{X_i}$, where $(B_{n1},\dots,B_{nn}) \sim \mathrm{Dir}(n;1,\dots,1)$, it follows that $B_{ni} \geq 0$, $\sum_{i=1}^n B_{ni} =1$ and $B_{ni} \sim \mathrm{Be}(1,n-1)$. Therefore, for every $\epsilon>0$, as $n \rightarrow \infty$
\begin{equation*}
P\left(\max_{1\leq i \leq n} \vert B_{ni}\vert < \epsilon \right)
=\left(\int_0^\epsilon \frac{(1-y)^{n-2}}{B(1,n-1)}dy \right)^n
=(1-(1-\epsilon)^{n-1})^n
\rightarrow 1.
\end{equation*}
Thus the first condition is proved. The second condition is the same as the \enquote{well-separatedness} condition (b) which we already verified. 
 So, we have $\hat{\theta}_n \overset{P_0}\to \theta_0$ and $\theta(\mathbb{B}_n) \overset{P_0 \times \mathbb{B}_n}\to \theta_0$. Hence by an application of the triangle inequality, $\theta(\mathbb{B}_n) \overset{\mathbb{B}_n}\to \hat{\theta}_n$ in $P_0$-probability.
\end{proof}
\begin{proof}[Verification of Condition $5$]
It has already been mentioned that the Bayesian bootstrap weights satisfy the bootstrap weights $\mathrm{(\romannumeral 1)}$-$(\romannumeral 5)$. 
\end{proof}
\begin{proof}[Proof for arbitrary $p>1$ when $k=2$] 
This proof differs from the previous case only in two places. The main difference lies in the fact that $\mathcal{S}^1=\{S_{\theta,t}^1: \Vert \theta - \theta_0 \Vert \leq R,\ t \in \mathbb{R}\}$ is a VC class for any fixed $p>1$, where $S_{\theta,t}^1$ is as defined in (7.10). For $k=2$, $S_{\theta,t}^1$ can be written as
\begin{equation}
S_{\theta,t}^1=\Big \{x:\vert x_1-\theta_1 \vert >\left(\frac{t^{p/(p-1)}}{1-t^{p/(p-1)}}\right)^{1/p}\vert x_2-\theta_2 \vert\Big \}.
\end{equation}
Define $R_{\theta,c}^1 = \{x:\vert x_1-\theta_1 \vert >c\vert x_2-\theta_2 \vert\}$ and $\mathcal{R}^1 = \{R_{\theta,c}^1: \theta \in \mathbb{R}^2, c \in \mathbb{R}\}$.

It is enough to show that $\mathcal{R}^1$ is a VC class, since $\mathcal{R}^1$ contains $\mathcal{S}^1$. We can write $R_{\theta,c}^1$ as
\begin{align*}
R_{\theta,c}^1 = &\{x:(x_1-\theta_1)>c(x_2-\theta_2),\ x_1 \geq\theta_1,\ x_2 \geq\theta_2 \}\\
&\bigcup \{x:(\theta_1-x_1)>c(x_2-\theta_2),\ \theta_1 \geq x_1,\ x_2 \geq \theta_2\}\\
&\bigcup \{x:(x_1-\theta_1)>c(\theta_2-x_2),\ x_1\geq\theta_1,\ \theta_2\geq x_2\}\\
&\bigcup \{x:(\theta_1-x_1)>c(\theta_2-x_2),\ \theta_1 \geq x_1,\ \theta_2 \geq x_2\}.
\end{align*}
Define $C_{\theta,c}=\{x:(x_1-\theta_1)>c(x_2-\theta_2),\ x_1\geq\theta_1,\ x_2\geq\theta_2\}$. By the same argument used in the previous proof, it is enough to show that $\mathcal{C}=\{C_{\theta,c}:\theta \in \mathbb{R}^2,c \in \mathbb{R}\}$ forms a VC class of sets. This follows since  $C_{\theta,c}$ can be written as
\begin{equation*}
C_{\theta,c}=\{x:x_1-\theta_1>c(x_2-\theta_2)\}\bigcup \{x:x_1\geq \theta_1\}\bigcup \{x:x_2 \geq \theta_2\}.
\end{equation*}
Each of the sets in the right hand side of the above expression is a sub-collection of $\mathcal{C}_2=\{x:a^Tx+b \geq 0:a \in \mathbb{R}^2,b \in \mathbb{R}\}$ which is a VC class by Lemma 2.6.15 of van der Vaart and Wellner \citep{van1996weak}. Hence by Lemma 2.6.17 of van der Vaart and Wellner \citep{van1996weak}, $\mathcal{C}$ is also a VC-class.\par
The second difference arises in ensuring (7.17). Following the same proof as in the previous case, by an application of Minkowski's inequality, we write
\begin{equation*}
\vert m_{\theta}-m_{\theta^{\prime}} \vert
\leq \Vert \theta - \theta^{\prime} \Vert_p.
\end{equation*}
This expression is bounded by $2^{(1/p)-(1/2)}\Vert \theta - \theta^{\prime} \Vert$ for $1<p<2$ and $\Vert \theta - \theta^{\prime} \Vert$ for $p \geq 2$. 
\end{proof}
\begin{lemma}
The asymptotic posterior distribution of $\sqrt{n}(\theta(P)-\hat{\theta}_n)$ is the same as the asymptotic conditional distribution of $\sqrt{n}(\theta(\mathbb{B}_n)-\hat{\theta}_n)$.
\end{lemma}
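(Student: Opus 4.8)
The plan is to deduce the lemma from the total variation bound $\|P-\mathbb{B}_n\|_{\mathrm{TV}}=o_{\mathrm{Pr}}(n^{-1/2})$ a.s.\ $[P_0^\infty]$ recalled in Section~2, together with the $Z$-estimator structure already exploited in the proof of Theorem~3.1. It suffices to show that $\sqrt{n}\bigl(\theta(P)-\theta(\mathbb{B}_n)\bigr)=o_{\mathrm{Pr}}(1)$ conditionally on the data, since then $\sqrt{n}(\theta(P)-\hat{\theta}_n)=\sqrt{n}(\theta(\mathbb{B}_n)-\hat{\theta}_n)+o_{\mathrm{Pr}}(1)$ and the already-established conditional limit of the right-hand side transfers verbatim. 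Because each $\psi_j(\cdot,\theta)$ is bounded by $1$ uniformly over $\theta$, the total variation bound yields
\begin{equation*}
\sup_{\theta\in\mathbb{R}^k}\bigl\|(P-\mathbb{B}_n)\psi(\cdot,\theta)\bigr\|=o_{\mathrm{Pr}}(n^{-1/2})\qquad\text{a.s.\ }[P_0^\infty].
\end{equation*}

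Since $\theta(P)$ solves $P\psi(\cdot,\theta)=0$, the displayed estimate shows that $\theta(P)$ is an approximate zero of the bootstrap estimating equation: $\mathbb{B}_n\psi(\cdot,\theta(P))=-(P-\mathbb{B}_n)\psi(\cdot,\theta(P))=o_{\mathrm{Pr}}(n^{-1/2})$. One also needs $\theta(P)\overset{\mathrm{Pr}}\to\theta_0$, which follows along the same route used for $\theta(\mathbb{B}_n)$ in the verification of Condition~4: the argmin-continuity theorem (Corollary~3.2.3 of van der Vaart and Wellner \citep{van1996weak}) applies once one notes that $\|P-\mathbb{B}_n\|_{\mathrm{TV}}\to0$ and $\mathbb{B}_n$ converges weakly to $P_0$, that the envelope/compactification argument again confines the minimizer to a fixed ball $\{\Vert\theta\Vert_p\le 3K\}$ with high probability, that $\{m_\theta:\theta\in\mathbb{R}^k\}$ is $P_0$-Glivenko--Cantelli, and that Condition~C2 provides the well-separatedness; the minimizer defining $\theta(P)$ is unique because $\Vert\cdot\Vert_p$ is strictly convex for $1<p<\infty$ and a Dirichlet process draw is a.s.\ not supported on a line in $\mathbb{R}^k$.

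It then remains to invoke the asymptotic linearization that is implicit in the proof of Theorem~3.2 and whose hypotheses have already been checked for $\psi$: the (conditional) Donsker property of $\mathcal{F}_R$ supplies the stochastic equicontinuity estimate $\sup\{\Vert\sqrt{n}(\mathbb{B}_n-P_0)(\psi(\cdot,\theta)-\psi(\cdot,\theta_0))\Vert/(1+\sqrt{n}\Vert\theta-\theta_0\Vert):\Vert\theta-\theta_0\Vert\le\delta_n\}=o_{\mathrm{Pr}}(1)$, and combined with the differentiability of $\Psi$ at $\theta_0$ with nonsingular $\dot{\Psi}_0$ this forces every random sequence $\theta_n^{\ast}\overset{\mathrm{Pr}}\to\theta_0$ with $\mathbb{B}_n\psi(\cdot,\theta_n^{\ast})=o_{\mathrm{Pr}}(n^{-1/2})$ to satisfy $\sqrt{n}(\theta_n^{\ast}-\theta_0)=-\dot{\Psi}_0^{-1}\sqrt{n}\,\mathbb{B}_n\psi(\cdot,\theta_0)+o_{\mathrm{Pr}}(1)$. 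Taking $\theta_n^{\ast}=\theta(P)$ (legitimate by the previous paragraph) and $\theta_n^{\ast}=\theta(\mathbb{B}_n)$ (an exact zero) and subtracting, the common leading term cancels, giving $\sqrt{n}(\theta(P)-\theta(\mathbb{B}_n))=o_{\mathrm{Pr}}(1)$, which proves the lemma.

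The step I expect to be the main obstacle is upgrading the exact-zero $Z$-estimator expansion to its approximate-zero form \emph{inside the conditional setup}: one must verify that the $o_{\mathrm{Pr}}(n^{-1/2})$ remainder coming from $(P-\mathbb{B}_n)\psi$ is jointly measurable and negligible in the joint law of the data and the posterior draw, not merely for each fixed data sequence, and that the rate-forcing role of Condition~2 (the $1+\sqrt{n}\Vert\theta-\theta_0\Vert$ denominator), which first delivers $\Vert\theta(P)-\theta_0\Vert=O_{\mathrm{Pr}}(n^{-1/2})$, still operates when $\mathbb{P}_n$ is replaced by the random bootstrap measure $\mathbb{B}_n$ and exact zeros by approximate ones. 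The uniform boundedness of $\psi$ is what keeps every remainder estimate elementary, so the argument leans on it at each step.
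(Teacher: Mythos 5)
Your proposal follows essentially the same route as the paper's own argument: both proofs rest on the total-variation bound $\|P-\mathbb{B}_n\|_{\mathrm{TV}}=o_{\mathrm{Pr}}(n^{-1/2})$ together with the uniform bound $\|\psi\|_\infty\le1$ to conclude that $\theta(P)$ is an approximate zero of the bootstrap estimating equation, $\|\mathbb{B}_n\psi(\cdot,\theta(P))\|=o_{\mathrm{Pr}}(n^{-1/2})$, and then both invoke the fact that Wellner and Zhan's theorem applies to any sequence of bootstrap \emph{asymptotic} $Z$-estimators, not just exact zeros. Your version simply unpacks the linearization that sits inside that theorem and subtracts the expansions for $\theta(P)$ and $\theta(\mathbb{B}_n)$ explicitly, and you also spell out the consistency $\theta(P)\to\theta_0$ and the a.s.\ uniqueness of the posterior-draw minimizer, both of which the paper leaves implicit when it says ``we verified all the conditions in Theorem 3.2''; this is a modest but genuine tightening, since Condition 4 of Theorem 3.2 is checked in the paper only for $\theta(\mathbb{B}_n)$, not for $\theta(P)$ itself.
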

\begin{proof}
We know $\theta(\mathbb{B}_n)$ satisfies $\Psi^{\star}(\theta(\mathbb{B}_n))=\mathbb{B}_n\psi(\cdot,\theta)=0$ and $\theta(P)$ satisfies $\Psi(\theta(P))=P\psi(\cdot,\theta)=0$.\par
The posterior distribution of $P$ given $X_1,\dots,X_n$ is $\mathrm{DP}(\alpha+n\mathbb{P}_n)$. From the fact that $\Vert P-\mathbb{B}_n \Vert_{TV}=o_{\mathrm{P_r}}(n^{-1/2})$ a.s. $[P_0^{\infty}]$, where $P_r= P^{\infty}\times \mathbb{B}_n$,
\begin{align*}
    \big\Vert P \psi(X,\theta)-\mathbb{B}_n\psi(X,\theta)\big\Vert &\leq \big\Vert \psi\big\Vert_{\infty}\big\Vert P-\mathbb{B}_n\big\Vert_{\mathrm{TV}}\\
    & \leq \big\Vert P-\mathbb{B}_n\big\Vert_{\mathrm{TV}}.
\end{align*}
where $$\Vert \psi \Vert_{\infty}=\sup_x\Vert \psi(x,\theta)\Vert \leq 1.$$
In view of this result, given $X_1,\dots,X_n$,
\begin{equation}
    \big\Vert\Psi^{\star}(\theta(P))-\Psi(\theta(P))\big\Vert=\big\Vert\Psi^{\star}(\theta(P))\Vert=o_{\mathrm{P_r}}(n^{-1/2}).
\end{equation}
Hence, for given $X_1,\dots,X_n$, $\theta(P)$ makes the bootstrap scores $\Psi^{\star}(\theta)$ approximately zero in probability. Therefore, given the observations $X_1,\dots,X_n$, $\theta(P)$ qualifies to be a sequence of bootstrap asymptotic $Z$-estimators. Theorem 3.1 in Wellner and Zhan \citep{wellner1996bootstrapping} (Theorem 3.2 in this paper) holds for any sequence of bootstrap asymptotic $Z$-estimators $\doublehat{\theta}_n$ that satisfies
\begin{equation}
    \big\Vert \Psi^{\star}(\doublehat{\theta}_n)\big\Vert = o_{\mathrm{Pr}}(n^{-1/2}).
\end{equation}
Since we verified all the conditions in Theorem 3.2 in our situation, in view of (7.20), the asymptotic posterior distribution of $\sqrt{n}(\theta(P)-\hat{\theta}_n)$ is same as the asymptotic conditional distribution of $\sqrt{n}(\theta(\mathbb{B}_n)-\hat{\theta}_n)$.
\end{proof}
\subsection{Proof of Theorem 5.1}
We use the same technique of approximating the posterior distribution of $P$ by a Bayesian bootstrap distribution. We view $\hat{Q}_n(u_1),\dots,\hat{Q}_n(u_m)$ as a $Z$-estimator satisfying the system of equations $\mathbb{P}_n\psi(\cdot,Q)=0$, where $\psi(\cdot,Q)=\{\psi_{lj}(\cdot,Q_{lj}): l=1,\dots,m,\ j=1,\dots,k\}$ is given by
\begin{equation}
\psi_{lj}(x,Q_{lj})= \frac{\vert x_j-Q_{lj} \vert^{p-1}}{\Vert x-Q_l\Vert_p^{p-1}}\sign(Q_{lj}-x_j)+u_{lj}.
\end{equation}
We define $Q_{\mathbb{B}_n}(u_1),\dots,Q_{\mathbb{B}_n}(u_m)$ as the corresponding \enquote{Bayesian bootstrapped} versions of the $Z$-estimators $\hat{Q}_n(u_1),\dots,\hat{Q}_n(u_m)$. We use Theorem 3.1 of Wellner and Zhan \citep{wellner1996bootstrapping} (Theorem 3.2 in this paper) and conclude Theorem 5.1. The proof is exactly same as that of Theorem 3.1. and hence is omitted.\par
\bibliographystyle{acm}
\bibliography{sample}
\end{document}